\newtheorem{thm}{Theorem}[section]
\newtheorem{lma}[thm]{Lemma}
\newtheorem{cor}[thm]{Corollary}
\newtheorem{prop}[thm]{Proposition}
\theoremstyle{definition}
\newtheorem{defn}[thm]{Definition}
\newtheorem{rem}[thm]{Remark}
\newcommand{\N}{\mathbb{N}}
\newcommand{\I}{\mathcal{I}}
\renewcommand{\i}{\mathtt{i}}
\renewcommand{\j}{\mathtt{j}}
\renewcommand{\k}{\mathtt{k}}
\renewcommand{\l}{\mathtt{l}  }
\newcommand{\lang}{\Sigma^*}
\newcommand{\G}{\mathcal{G}}
\renewcommand{\L}{\mathcal{L}}
\newcommand{\C}{\mathcal{C}}
\renewcommand{\O}{\mathcal{O}}
\newcommand{\m}{\mathtt{m}}
\newcommand{\hd}{\dim_\textup{H}}
\newcommand{\bd}{\dim_\textup{B}}
\newcommand{\ubd}{\overline{\dim}_\textup{B}}
\newcommand{\lbd}{\underline{\dim}_\textup{B}}
\renewcommand{\m}{\mathfrak{m}}
\newcommand{\n}{\mathfrak{n}}
\title{Box dimensions of $(\times \m, \times \n)$-invariant sets}
\author{Jonathan M. Fraser} \address{Mathematical Institute, University of St Andrews, Scotland, KY16 9SS}
\email{jmf32@st-andrews.ac.uk}
\author{Natalia Jurga} \address{Mathematical Institute, University of St Andrews, Scotland, KY16 9SS}
\email{naj1@st-andrews.ac.uk}
\thanks{The  authors were both     supported by an \emph{EPSRC Standard Grant} (EP/R015104/1). J. M. Fraser was also supported by  a  \emph{Leverhulme Trust Research Project Grant} (RPG-2019-034).}
\begin{document}

\maketitle

\begin{abstract}
We study the box dimensions of sets invariant under the toral endomorphism $(x, y) \mapsto (\m x \text{ mod } 1, \, \n y \text{ mod } 1)$ for integers $\mathfrak{n}>\mathfrak{m} \geq 2$.  The basic examples of such sets are Bedford-McMullen carpets and, more generally, invariant sets are modelled by subshifts on the associated symbolic space.  When this subshift is topologically mixing  and sofic the situation is well-understood by results of Kenyon and Peres.    Moreover, other work of Kenyon and Peres shows that the Hausdorff dimension is generally given by a variational principle.  Therefore, our work is focused on the box dimensions in the case where the underlying shift is not topologically mixing and  sofic.  We establish straightforward upper and lower bounds for the box dimensions in terms of entropy which hold for all subshifts and show that the upper bound is the correct value for coded subshifts whose entropy can be realised by words which can be freely concatenated, which includes many well-known families such as $\beta$-shifts,  (generalised) $S$-gap shifts, and transitive sofic shifts. We also provide examples of transitive coded subshifts where the general upper bound fails and the box dimension is actually given by the general lower bound.  In the non-transitive sofic setting, we provide a formula for the box dimensions which is often intermediate between the general lower and upper bounds.  
\end{abstract}

\section{Introduction}

We study compact sets invariant under the toral endomorphism
\[
T(x, y) = (\m x \text{ mod } 1, \, \n y \text{ mod } 1)
\]
 for integers $\n>\m \geq 2$.  This dynamical system is a basic and fundamental example of an expanding  non-conformal system and invariant sets have many subtle properties.  The simplest  examples of such invariant sets are the self-affine carpets  introduced by Bedford and McMullen in 1984 \cite{bedford, mcmullen}.  In particular, these are modelled by a full shift.  More generally, compact $(\times \m, \times \n)$-invariant sets are modelled by subshifts on the associated symbolic space.   Kenyon and Peres \cite{kp}  studied the more general case when  this subshift is topologically mixing and sofic and in \cite{kp-measures} they resolved the Hausdorff dimension case in general by proving a variational principle.  These papers provide the starting point for our investigation,   which is focused on the box dimensions in the case where the underlying shift is not topologically mixing and  sofic.  We expand  the theory in several directions.

Let $\Delta_{\m,\n}=\{(a,b)\; : \; 1 \leq a \leq \m, \; 1 \leq b \leq \n, \; a,b \in \mathbb{N}\}.$  For any $(a,b) \in \Delta_{(\m,\n)}$ define the contraction $S_{(a,b)}:[0,1]^2 \to [0,1]^2$ as
$$S_{(a,b)}(x,y)= \begin{pmatrix} \frac{1}{\m}&0 \\0&\frac{1}{\n} \end{pmatrix} \begin{pmatrix} x\\y \end{pmatrix}+ \begin{pmatrix} \frac{a-1}{\m}\\\frac{b-1}{\n} \end{pmatrix}.$$
Define the coding map $\Pi:\Delta_{\m,\n}^\N \to [0,1]^2$ as
$$\Pi\left((a_1,b_1)(a_2,b_2) \ldots\right):=\lim_{n \to \infty} S_{(a_1,b_1)} \circ \cdots \circ S_{(a_n,b_n)}(0).$$
Consider any compact $(\times \m, \times \n)$-invariant set $F$, meaning that $T(F) \subseteq F$. Then there exists a digit set $\I \subseteq\Delta_{\m,\n}$ and a subshift $\Sigma$ on the digit set $\I$ (meaning a compact $\sigma$-invariant subset $\Sigma \subseteq \I^\N$, i.e. $\sigma(\Sigma) \subseteq \Sigma$ where $\sigma:\Sigma \to \Sigma$ denotes the left shift map) such that $F=\Pi(\Sigma)$. For example, if $\Sigma$ is the full shift on $\I$ then $\Pi(\Sigma)$ is a Bedford-McMullen carpet \cite{bedford,mcmullen}. For brevity, rather than writing sequences in $\Sigma$ as $(a_1,b_1)(a_2,b_2) \ldots$ and finite words which appear in sequences of $\Sigma$ as $(a_1,b_2) \ldots (a_n,b_n)$ we will for the most part denote both infinite sequences and finite words by variables such as $\i,\j,$ and $\k$.

Given a subshift $\Sigma$, let $\lang$ denote the language of $\Sigma$, meaning the collection of finite words which appear in sequences $\i \in \Sigma$. For $n \in \N$ let $\Sigma_n$ denote words in $\lang$ which have length $n$. We say $\Sigma$ is \emph{topologically transitive} if for all $\i, \j \in \lang$ there exists $\k \in \lang$ such that $\i\k\j \in \lang$. We say $\Sigma$ is \emph{topologically mixing} if there exists $N \in \N$ such that for all $\i, \j \in \lang$ there exists $\k \in \Sigma_N$ such that $\i \k \j \in \lang$. Recall that the \emph{topological entropy} of $\Sigma$ is defined as $h(\Sigma):=\lim_{n \to \infty} \frac{1}{n} \log \# \Sigma_n$, where the limit exists by submultiplicativity arguments.

The $(\times \m, \times \n)$-invariant sets are typically fractal and a key question of interest is in computing their dimensions, especially Hausdorff and box dimensions, see \cite{bedford,deliu,  kp, kp-measures,mcmullen}.  For more background on Hausdorff and box dimensions, see \cite{falconer}.   We write $\hd$, $\lbd$, and $\ubd$ for the Hausdorff, lower and upper box dimensions, respectively.  The lower and upper box dimensions are defined by
\[
 \lbd E  = \liminf_{\delta \to 0} \frac{\log N_\delta(E)}{-\log \delta} \qquad \text {and} \qquad \ubd E = \limsup_{\delta \to 0} \frac{\log N_\delta(E)}{-\log \delta},
\]
respectively, where $N_\delta(E)$ denotes the smallest number of sets of diameter $\delta>0$ required to cover $E$.  It is useful to keep in mind that, for all bounded sets $E$ in Euclidean space, 
\[
\hd E \leq \lbd E \leq \ubd E.
\]
Moreover, if the upper and lower box dimensions coincide we simply refer to the box dimension, written $\bd$.  In the case where $\Sigma$ is a full shift (over a restricted alphabet $\mathcal{I}\subseteq\Delta_{\m,\n}$), the box and Hausdorff dimensions were computed independently by Bedford \cite{bedford} and McMullen \cite{mcmullen}. If $\Sigma$ is a topologically mixing sofic subshift, then the box and Hausdorff dimensions were given by Kenyon and Peres \cite{kp}.  We say that a subshift is \emph{sofic} if it can be presented by a finite directed labelled graph $G$ (see Section \ref{sofic} for a more precise definition).  If $\Sigma$ is a topologically transitive subshift of finite type, then the box dimension was computed by Deliu \emph{et al} \cite{deliu}.  The only progress beyond the sofic setting is provided by Kenyon and Peres \cite{kp-measures} where they show that for any compact $(\times \m, \times \n)$-invariant set  the Hausdorff dimension is given by a variational principle, that is, as the supremum of the Hausdorff dimensions of  $(\times \m, \times \n)$-invariant measures supported on the set.  It is also shown that there exists a maximising (ergodic) measure, which achieves the Hausdorff dimension of the set.  Moreover, it is shown in \cite{kp-measures} that  the Hausdorff dimension  of an ergodic $(\times \m, \times \n)$-invariant measure is given by a Ledrappier-Young formula. In some sense, this settles the question of Hausdorff dimension.  The box dimensions of  $(\times \m, \times \n)$-invariant sets remains an interesting open programme.  We recall the box dimension result of Kenyon and Peres which is the current state of the art. Let $\pi: \Sigma \to \pi \Sigma$ denote the projection mapping $\pi\left((a_1,b_1)(a_2,b_2) \ldots\right)=a_1a_2 \ldots$.  In particular, $\pi \Sigma$ is itself a subshift.

\begin{thm}[Proposition 3.5, \cite{kp}]\label{kp}
Suppose $\Sigma$ is a topologically mixing sofic subshift. Then
\begin{eqnarray}
\bd \Pi(\Sigma)=\frac{h(\pi\Sigma)}{\log \m} + \frac{h(\Sigma) - h(\pi\Sigma)}{\log \n}.\label{usual}
\end{eqnarray}
\end{thm}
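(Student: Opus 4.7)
The plan is to use the classical \emph{approximate squares} argument adapted to the sofic setting. For $k \in \N$, set $\ell_k := \lceil k \log \n / \log \m \rceil$ so that $\m^{-\ell_k} \asymp \n^{-k}$, and partition $[0,1]^2$ into rectangles of dimensions $\m^{-\ell_k} \times \n^{-k}$; each such rectangle meeting $\Pi(\Sigma)$ corresponds to an admissible pair $(\i, \mathbf{a})$ with $\i \in \Sigma_k$, $\mathbf{a} \in \{1,\ldots,\m\}^{\ell_k - k}$, and such that $\i \cdot (\mathbf{a}, \mathbf{b}) \in \Sigma_{\ell_k}$ for some $\mathbf{b}$. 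Let $A_k$ denote this count. A routine covering argument gives $N_{\n^{-k}}(\Pi(\Sigma)) \asymp A_k$, so it suffices to show
$$\lim_{k \to \infty} \frac{\log A_k}{k \log \n} = \frac{h(\pi\Sigma)}{\log \m} + \frac{h(\Sigma) - h(\pi\Sigma)}{\log \n}.$$

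For the \textbf{upper bound}, the first step is to observe that each admissible $\mathbf{a}$ is a length-$(\ell_k - k)$ subword of a word in $\pi\Sigma$, so $A_k \leq |\Sigma_k| \cdot |\pi\Sigma_{\ell_k - k}|$. Submultiplicativity of the word-counting sequences then yields $\log A_k \leq k h(\Sigma) + (\ell_k - k) h(\pi\Sigma) + o(k)$; substituting $\ell_k/k \to \log\n/\log\m$ and dividing by $k \log \n$ delivers the stated upper bound. This step requires no hypothesis on $\Sigma$ beyond being a subshift, and matches the general upper bound the paper will later isolate.

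For the \textbf{lower bound}, which is the main difficulty, the plan is to exploit both mixing and soficity. Fix a labeled graph $G$ presenting $\Sigma$, chosen so that $G$ is strongly connected and aperiodic (possible since $\Sigma$ is mixing sofic); its first-coordinate label projection presents the (also mixing sofic) subshift $\pi\Sigma$. Reading $\i \in \Sigma_k$ ends at some state $v$ of $G$, and admissible extensions of $\i$ correspond to length-$(\ell_k - k)$ outgoing paths from $v$, whose first-coordinate labels constitute the set $E(\i)$ of valid $\mathbf{a}$'s. After passing to a right-resolving presentation of $\pi\Sigma$, Perron--Frobenius applied to the irreducible aperiodic transition matrix shows that the number of distinct length-$n$ outgoing labels from any state is bounded below by a positive constant times $e^{n h(\pi\Sigma)}$. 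Transferring this uniform estimate back to $E(\i)$ (via the projection of graph states) gives $|E(\i)| \gtrsim e^{(\ell_k - k) h(\pi\Sigma)}$ uniformly in $\i$, and summing over $\i \in \Sigma_k$ yields $A_k \gtrsim e^{k h(\Sigma) + (\ell_k - k) h(\pi\Sigma)}$, matching the upper bound.

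The principal obstacle lies in the transfer step: one must align a sofic presentation of $\Sigma$ with one of $\pi\Sigma$ so that every state encountered after reading $\i$ admits comparably many forward labels in $\pi\Sigma$. Topological mixing supplies the spectral gap required for uniform Perron--Frobenius estimates, while soficity reduces the problem to a finite combinatorial object. Without these hypotheses the lower bound on $|E(\i)|$ can fail, and indeed the remainder of the paper is devoted to exploring precisely when and how the formula breaks down beyond the mixing sofic regime.
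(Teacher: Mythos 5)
Your overall strategy --- approximate squares of size $\m^{-\ell_k}\times\n^{-k}$, the count $A_k\le\#\Sigma_k\cdot\#\pi\Sigma_{\ell_k-k}$ for the upper bound, and a per-word count of distinct column extensions for the lower bound --- is essentially the argument the paper uses. (Strictly speaking the paper only cites this statement from Kenyon--Peres; its own proof of the generalisation, Theorem \ref{red-thm}, specialises to an irreducible presentation and yields exactly this case, with the upper bound already isolated as Proposition \ref{bounds}.)

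The one place where the routes diverge is the lower bound, and it is also the one place where your argument is not actually carried out. You reduce everything to the claim that, uniformly over $\i\in\Sigma_k$, the set $E(\i)$ of admissible column words has cardinality $\gtrsim e^{(\ell_k-k)h(\pi\Sigma)}$, and you propose to prove this by Perron--Frobenius on a right-resolving presentation of $\pi\Sigma$ followed by a ``transfer step via the projection of graph states.'' That transfer is genuinely delicate: a right-resolving presentation of $\pi\Sigma$ is a different graph from $G$ (typically obtained by a subset construction), so the state reached after reading $\i$ in $G$ does not correspond in any canonical way to a state of the right-resolving presentation reached after reading $\pi\i$, and the uniform Perron--Frobenius estimate on path counts does not automatically become an estimate on distinct \emph{labels} of paths out of a given vertex of $G$. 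The paper avoids this entirely: it works directly with the projected graph $\pi G$ (same vertex set as $G$) and proves the elementary counting lemma that for an irreducible presentation $\lim_n\frac1n\log\#\Sigma_n^{v\pm}=h(\Sigma)$, using only $\#\Sigma_n^{v\pm}\ge\max_w\#\Sigma_{n-M}^{w\pm}$, which follows from irreducibility with $M$ the diameter of the graph. Applied to $\pi G$ this gives $\#\pi\Sigma_n^{w+}\ge e^{n(h(\pi\Sigma)-\varepsilon)}$ for large $n$ at every vertex $w$, which is all that is needed; moreover the paper does not even require uniformity over all $\i\in\Sigma_k$ --- it restricts to the words $\i$ ending at a single fixed vertex $v$, glues on a fixed connecting word, and notes that this subcollection already has full exponential growth rate. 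I would recommend you replace the right-resolving/spectral-gap machinery (which also makes aperiodicity look more essential than it is --- transitivity suffices once one accepts an $\varepsilon$ loss) with this direct counting argument on $\pi G$; as written, the ``transfer step'' you flag as the principal obstacle is a real gap, though a fixable one.
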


It is straightforward to construct an example where \eqref{usual} does not hold for a general sofic subshift $\Sigma$. For example fix $\m=2$, $\n=4$ and $\I=\{(1,1),(1,2),(1,3),(1,4),(2,1)\}$ and denote $\Sigma_2=  \{(1,1),(2,1)\}^\N$, $\Sigma_3=\{(1,2),(1,3),(1,4)\}^\N$. Consider the subshift of finite type $\Sigma=\Sigma_2 \cup \Sigma_3$. Then,
$$\bd \Pi(\Sigma)=\max\left\{\bd \Pi(\Sigma_2),\bd \Pi(\Sigma_3)\right\}=1<1+\frac{\log 3-\log 2}{\log 4}=\frac{h(\pi\Sigma)}{\log \m} + \frac{h(\Sigma) - h(\pi\Sigma)}{\log \n}$$
where in the second equality we apply \eqref{usual} to $\bd \Pi(\Sigma_2)$ and $\bd \Pi(\Sigma_3)$. This example heavily relies on a lack of transitivity. 

We fully resolve the  sofic case by finding a formula that holds for any sofic subshift (which is not just the maximum over irreducible parts as above) and which simplifies to \eqref{usual} in the transitive case, thus generalising Theorem \ref{kp} from topologically mixing to topologically transitive. 

We say a graph $G$ is \emph{irreducible} if given any pair of vertices $v,w \in G$ there is a path in $G$ from $v$ to $w$. Given a finite directed labelled graph $G$ which presents $\Sigma$, let $\{G_i\}_{i=1}^k$ denote the irreducible components of $G$, meaning the maximal irreducible subgraphs of $G$. Each subgraph $G_i$ therefore presents a subshift $\Sigma_{G_i} \subseteq \Sigma$. Given $1 \leq i \leq k$ we let $\{i\}^+$ denote the set of all indices $1 \leq j \leq k$ such that there is a path in $G$ from a vertex in $G_i$ to a vertex in $G_j$, noting that $\{i\}^+$ is necessarily non-empty since we always have $i \in \{i\}^+$.

\begin{thm} \label{red-thm}
Let $\Sigma$ be a sofic subshift which is presented by a graph $G$. Let $\{G_1, \ldots, G_k\}$ be the irreducible components of $G$. Then
\begin{equation}
\dim_\textup{B} \Pi(\Sigma)= \max_{1 \leq i \leq k}\left\{ \frac{h(\Sigma_{G_i})}{\log \n} + \max_{j \in \{i\}^+} h(\pi \Sigma_{G_j}) \left(\frac{1}{\log \m}-\frac{1}{\log \n}\right)\right\}. \label{red}
\end{equation}
\end{thm}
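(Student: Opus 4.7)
The plan is to bound $N_\delta(\Pi(\Sigma))$ from above and below via a Bedford--McMullen-style count of approximate squares. Fix small $\delta>0$, let $n\in\N$ satisfy $\n^{-n}\le\delta<\n^{-n+1}$, and set $n'=\lceil n\log\n/\log\m\rceil$ so that $\m^{-n'}\asymp\delta$. Each approximate square of side $\asymp\delta$ meeting $\Pi(\Sigma)$ is indexed by a pair $(\i,\eta)$ where $\i\in\pi\Sigma_{n'}$ and $\eta$ is a $y$-word of length $n$ with the length-$n$ word $(\i|_{[1,n]},\eta)$ lying in $\Sigma_n$; equivalently
\[
N_\delta(\Pi(\Sigma))=\sum_{\k\in\Sigma_n}\#\bigl\{\i\in\pi\Sigma_{n'}:\i|_{[1,n]}=\pi\k\bigr\}.
\]

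For the upper bound we partition $\Sigma_n$ according to the irreducible component $G_j$ containing the endpoint of some fixed presenting path of $\k$ in $G$. A length-$n$ path ending in $G_j$ traverses a chain of components terminating at $G_j$; concentrating (almost) all steps in the chain's highest-entropy component yields, writing $i\preceq j$ whenever $j\in\{i\}^+$,
\[
\#\{\k\in\Sigma_n:\text{chosen path ends in }G_j\}\le\mathrm{poly}(n)\cdot\max_{i\preceq j}e^{n h(\Sigma_{G_i})}.
\]
The number of $x$-extensions of $\pi\k$ is bounded, by the same partition-by-components argument applied to paths of length $n'-n$ starting at the endpoint of the chosen path (which lies in $G_j$), by $\mathrm{poly}(n)\cdot\max_{j\preceq j'}e^{(n'-n)h(\pi\Sigma_{G_{j'}})}$. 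Multiplying, summing over the finitely many $j$, using transitivity of $\preceq$, and dividing by $-\log\delta\sim n\log\n$ together with $n'-n\sim n(\log\n/\log\m-1)$ yields the claimed upper bound on $\ubd\Pi(\Sigma)$.

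For the matching lower bound, fix $(i,j)$ with $j\in\{i\}^+$ and a path $p$ in $G$ of bounded length $\ell$ from a vertex $v\in G_i$ to a vertex $w\in G_j$. Consider the family of length-$n'$ paths consisting of a length-$(n-\ell)$ path in $G_i$ ending at $v$, followed by $p$, followed by a length-$(n'-n)$ path in $G_j$ starting at $w$. Irreducibility of $G_i$ and $G_j$ supplies $\ge e^{(n-\ell)h(\Sigma_{G_i})-o(n)}$ choices for the first segment and $\ge e^{(n'-n)h(\pi\Sigma_{G_j})-o(n)}$ distinct $x$-projections for the third. Since the first segment together with the fixed contribution of $p$ determines the $y$-prefix of length $n$ and the first $n-\ell$ letters of the $x$-prefix, while the third segment determines the remaining $n'-n$ letters of the $x$-prefix, distinct choices of (first segment, third-segment $x$-projection) give distinct approximate squares. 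Taking $\liminf$ and maximising over $(i,j)$ furnishes the matching lower bound on $\lbd\Pi(\Sigma)$.

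The main technical obstacle is to execute these path counts with the required $\pm o(n)$ precision in the sofic (rather than merely SFT) setting. The required building block is that for an irreducible sofic shift $\Lambda$ presented by a graph $G'$, the number of length-$n$ paths in $G'$ between any two fixed vertices is $e^{nh(\Lambda)\pm o(n)}$, with the analogous statement for their $x$-projections, so that the lower-bound estimates above hold in the sofic generality. A secondary subtlety is that several distinct paths in $G$ may code the same word of $\Sigma$: this is harmless for the upper bound since path counts dominate word counts, but for the lower bound distinctness of approximate squares must be verified from the $(x,y)$-labels rather than the underlying paths, as by the explicit reconstruction above.
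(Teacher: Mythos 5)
Your proposal follows essentially the same route as the paper: the same decomposition of $N_\delta(\Pi(\Sigma))$ into level-$k(\delta)$ words times column-counts of their level-$l(\delta)$ extensions, the same component-wise covering for the upper bound (via the fact that the entropy of a sofic shift is the maximum of the entropies of its irreducible components), and the same concatenation construction $\i\k\j$ through a fixed connecting path for the lower bound, with your ``building block'' being exactly the paper's lemma that $\lim_n \frac1n \log \#\Sigma_n^{v\pm} = h(\Sigma)$ for an irreducible presentation (stated there for labels rather than paths, which settles the path-versus-word subtlety you flag). The only small imprecision is that in the upper bound one should sum over \emph{all} components in which some presenting path of $\k$ can terminate, rather than partitioning by a single chosen path, since an extension of $\k$ need not extend that chosen path; this costs only a factor of $k$ and is how the paper phrases it.
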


As in \cite[Proposition 3.5]{kp}, each entropy $h(\Sigma_{G_i})$ and $h(\pi \Sigma_{G_i})$ can be expressed in terms of the spectral radius of the adjacency matrix of an appropriate right-resolving presentation (of $\Sigma_{G_i}$ and $\pi \Sigma_{G_i}$ respectively). When $\Sigma$ is topologically transitive and sofic, $\Sigma$ can be presented by an irreducible labelled graph, therefore \eqref{red} simplifies to \eqref{usual}. Additionally, we can also recover \eqref{usual} for some sofic subshifts which are not topologically transitive, under some assumptions on the ``position'' of the entropy maximising irreducible components, see Corollary \ref{source}. Moreover, the ``position'' of the entropy maximising irreducible components can also determine whether or not the Hausdorff and box dimensions are equal, see Corollary \ref{b=h}.

Next, we turn to more general subshifts. By bounding $\lbd \Pi(\Sigma)$ (and $\hd \Pi(\Sigma)$) below by the box dimension of its projection and by a crude estimate involving entropy and the larger Lyapunov exponent, we show (see Proposition \ref{bounds}) that any invariant set satisfies a trivial lower bound of $\lbd \Pi(\Sigma) \geq \max\left\{\frac{h(\pi\Sigma)}{\log \m}, \frac{h(\Sigma)}{\log \n}\right\}$. On the other hand, we  also show  (see Proposition \ref{bounds}) that the right hand side of \eqref{usual} is a trivial upper bound on $\ubd \Pi(\Sigma)$ in general. While Theorem \ref{red-thm} demonstrates that the box dimension can drop from this trivial upper bound if $\Sigma$ is not topologically transitive, it is interesting to ask whether transitivity is sufficient for \eqref{usual} to hold for general subshifts. We answer this in the negative:

\begin{thm}\label{dimdrop}
There exists a topologically transitive subshift $\Sigma$ with $0<h(\pi\Sigma)< h( \Sigma)$ and
$$\bd \Pi(\Sigma)= \max\left\{\frac{h(\Sigma)}{\log \n}, \frac{h(\pi \Sigma)}{\log \m}\right\}.$$
\end{thm}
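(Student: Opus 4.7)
The plan is to exhibit an explicit topologically transitive subshift $\Sigma$ at which the general lower bound on box dimension from Proposition \ref{bounds} is attained and the upper bound \eqref{usual} is strict. Transitivity will be built in by taking $\Sigma$ to be a coded subshift, generated by freely concatenable words from some infinite family $\mathcal{W}$; the real work lies in engineering the entropies and the geometry of $\Pi(\Sigma)$.

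A natural candidate takes $\mathcal{W}$ to consist of words of rapidly increasing lengths $\ell_k \to \infty$, each of which is built from a ``rigid'' sub-block in which one coordinate is held constant, followed by a ``rich'' sub-block in which the other coordinate varies maximally while the first is pinned. By tuning the alphabets of the two sub-blocks, the lengths $\ell_k$, and the multiplicity of generators at each length, one can arrange $0 < h(\pi\Sigma) < h(\Sigma)$ with any desired ratio, and in particular so that $\max\{h(\Sigma)/\log\n,\,h(\pi\Sigma)/\log\m\}$ is strictly less than the formula \eqref{usual}. The rigid sub-block plays the dual role of providing a long predictable stretch in which the image $\Pi(\Sigma)$ degenerates onto a low-dimensional fiber, and of acting as a synchronisation device between successive generators, so that the parse of a typical word is essentially determined by its symbol sequence.

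The matching lower bound on $\bd \Pi(\Sigma)$ is immediate from Proposition \ref{bounds}, so the substantive step is the upper bound $\ubd \Pi(\Sigma) \leq \max\{h(\Sigma)/\log\n,\,h(\pi\Sigma)/\log\m\}$. The idea is that the rapid growth of $\ell_k$ ensures that at scale $\m^{-n}$ a typical length-$n$ word is essentially a single generator, so the level-$n$ rectangles can be grouped by their generator-parse; inside each group, the rigid sub-block forces the associated rectangles to line up into columns that can be collapsed into very few $\m^{-n}$-boxes, avoiding the multiplicative factor $(\n/\m)^n$ which is what makes the standard Bedford--McMullen cover produce \eqref{usual}. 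The main obstacle is controlling the boundary error --- words that straddle two generators, or involve several short generators --- and showing that such contributions are sub-exponential; this is the step that dictates the rapid growth of $\ell_k$, and forces a delicate balance between $\mathcal{W}$ being rich enough to guarantee $h(\Sigma) > h(\pi\Sigma)$ and structured enough for the refined cover to go through.
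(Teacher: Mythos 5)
Your overall strategy matches the paper's: take a coded subshift (transitivity for free), note the lower bound is immediate from Proposition \ref{bounds}, and build generators containing a long ``rigid'' block so that entropy-rich prefixes have column-poor continuations. The paper's construction is exactly of the type you sketch: $\C=\{(1,1)\}\cup\{(2,1)\}\cup\{w\,(1,2)^m : w\in\Omega^*,\ m\ge 2^{|w|}\}$ with $\Omega=\{(1,3),(1,4),(1,5)\}$, so $h(\Sigma)=\log 3$ (realised only in the closure, by $\Omega^\N$), $h(\pi\Sigma)=\log 2$. But your write-up is a plan rather than a proof, and two of its load-bearing claims are either missing or wrong as stated.

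First, the architecture is underspecified in a way that matters. If, as you say, every generator is a rigid block followed by a rich block ``with the first coordinate pinned,'' then $\pi\Sigma$ is trivial and $h(\pi\Sigma)=0$; the projection entropy has to come from somewhere with varying first coordinate (in the paper, the freely concatenable single letters $(1,1),(2,1)$). Once such letters are present, a length-$k(\delta)$ prefix that ends inside a rich block could in principle be continued past the end of its generator into first-coordinate-varying material, picking up $\approx 2^{l(\delta)-k(\delta)}$ columns \emph{and} carrying the $3^{k(\delta)}$ count --- which would restore the trivial upper bound. What blocks this is the quantitative condition $m\ge 2^{|w|}$: a rich prefix of length comparable to $k(\delta)$ forces a rigid tail of length $\ge 2^{k(\delta)}\gg l(\delta)-k(\delta)$, so the continuation to level $l(\delta)$ cannot escape the current generator. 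You gesture at ``rapid growth of $\ell_k$'' but never isolate this as the mechanism, and ``at scale $\m^{-n}$ a typical word is essentially a single generator'' is not the right statement --- words built entirely from the short generators are abundant and are never a single generator.

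Second, your claim that the contribution of words straddling several generators is ``sub-exponential'' cannot be correct: those words contain $\{(1,1),(2,1)\}^n$, so they number at least $e^{nh(\pi\Sigma)}$. What must actually be proved is that they number at most $e^{n(h(\pi\Sigma)+\varepsilon)}$, i.e.\ that $h=\limsup\frac1n\log\#\G_n$ equals $h(\pi\Sigma)=\log 2$ and is strictly below $h(\Sigma)=\log 3$. This is the content of the paper's Lemma \ref{I_estimate2}, a genuine combinatorial count (the number of $\Omega$-digits in a parsable word of length $n$ is at most $\log_2 n$ because of the forced exponential tails, and the interleavings are controlled by binomial coefficients that are polynomially bounded). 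Both the computation $h(\Sigma)=\log 3$ and the box-dimension upper bound rest on this estimate, and the upper bound additionally needs the explicit case decomposition of $\Sigma_{l(\delta)}$ (the paper's categories (1)--(3)) with the resulting geometric series summed against the factor $(3/2^{1+\varepsilon})^{\log_2(\cdot)}$. None of this is present, so as it stands the proposal does not establish the theorem.
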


In particular, in the above example the trivial lower bound is in fact the exact value of the box dimension.  Moreover this box dimension is clearly strictly smaller than the trivial upper  bound  and we can modify our example such that either of the trivial lower bounds equals the box dimension.  The subshift $\Sigma$ that we construct towards the proof of Theorem \ref{dimdrop} falls into the class of \emph{coded subshifts}.  Coded subshifts, which were first introduced in \cite{bh} and include the well-known subclasses of $S$-gap shifts, $\beta$-shifts and Dyck shifts, are subshifts which can be presented by an irreducible (but not necessarily finite), directed labelled graph (see Section \ref{coded}). In particular, they clearly extend the class of transitive sofic subshifts and provide a natural and interesting class to investigate which, unlike subshifts of finite type and sofic subshifts in general, cannot be handled by techniques that depend on finiteness of the presentation.

A useful equivalent characterisation of coded subshifts is that a subshift $\Sigma$ is coded if there exists a countable collection of finite words $\C$, which we call generators, such that $\Sigma$ is the closure of the set of sequences obtained by freely concatenating the generators. In particular, $\pi \Sigma$ is also a coded subshift which is generated by $\pi \C$. We say that a coded subshift $\Sigma$ has \emph{unique decomposition with respect to $\C$} if no finite word can be written as a concatenation of generators in $\C$ in distinct ways. 

We will show that if the entropy of a coded subshift $\Sigma$ and $\pi \Sigma$ can be realised by counting words which can be obtained by concatenating their (respective) generators, then the box dimension  $\bd \Pi(\Sigma)$ equals the trivial upper bound given in Proposition \ref{bounds}. In particular let $\G_n$ denote all words of length $n$ in $\Sigma^*$ which can be written by concatenating generators from $\C$. Analogously, $\pi \G_n$ are all words of length $n$  in $(\pi \Sigma)^*$ which  can be written by concatenating generators from $\pi \C$. We denote
\begin{eqnarray*}
h:=\limsup_{n \to \infty} \frac{1}{n} \log \# \G_n \quad \quad & \textnormal{and} & \quad \quad  h_\pi:=\limsup_{n \to \infty} \frac{1}{n} \log \# \pi\G_n.
\end{eqnarray*}

\begin{thm}
Let $\Sigma$ be a coded subshift and suppose $h=h(\Sigma)$ and $h_\pi=h(\pi \Sigma)$. Then
\begin{eqnarray}
\bd \Pi(\Sigma)=\frac{h(\pi\Sigma)}{\log \m} + \frac{h(\Sigma) - h(\pi\Sigma)}{\log \n}.
\end{eqnarray}
\label{g-thm}
\end{thm}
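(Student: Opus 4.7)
The upper bound $\ubd \Pi(\Sigma) \leq \frac{h(\pi\Sigma)}{\log\m} + \frac{h(\Sigma) - h(\pi\Sigma)}{\log\n}$ holds unconditionally by Proposition \ref{bounds}, so my plan is to establish only the matching lower bound on $\lbd \Pi(\Sigma)$; combined with $\lbd \leq \ubd$ this forces both box dimensions to coincide with the asserted formula.

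For the lower bound I would deploy the classical Bedford--McMullen approximate-square count, refined to exploit the coded structure of $\Sigma$. Fix $n$ large, set $\delta_n = \m^{-n}$ and $k = \lceil n \log\m/\log\n \rceil$, so that $\m^{-n}$ and $\n^{-k}$ agree up to a bounded factor. Each $\delta_n$-approximate square of $\Pi(\Sigma)$ is uniquely indexed by a pair $(\j,\i) \in \Sigma_k \times (\pi\Sigma)_{n-k}$ such that $\pi(\j)\i \in (\pi\Sigma)_n$ and some $\j' \in \Sigma_n$ extends $\j$ with $\pi\j' = \pi(\j)\i$; distinct admissible pairs give geometrically distinct approximate squares. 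The free-concatenation hypothesis guarantees that any $\j \in \G_k$ and any $\i \in \pi\G_{n-k}$ form such an admissible pair: writing $\i$ as a concatenation of elements of $\pi\C$ and lifting each factor to a generator in $\C$ yields some $\mathbf{w} \in \G_{n-k}$ with $\pi\mathbf{w}=\i$, and then $\j\mathbf{w} \in \G_n \subseteq \Sigma_n$ is the required lift. This produces the fundamental inequality
$$N_{\delta_n}(\Pi(\Sigma)) \;\geq\; \#\G_k \cdot \#\pi\G_{n-k}.$$

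Taking logarithms, dividing by $-\log \delta_n = n\log\m$, and using $k\log\n = n\log\m + O(1)$ (so that $(n-k)/n \to (\log\n - \log\m)/\log\n$), a routine calculation converts the above into
$$\liminf_{n \to \infty} \frac{\log N_{\delta_n}(\Pi(\Sigma))}{-\log\delta_n} \;\geq\; \frac{h(\Sigma)}{\log\n} + h(\pi\Sigma)\Bigl(\frac{1}{\log\m}-\frac{1}{\log\n}\Bigr) = \frac{h(\pi\Sigma)}{\log\m} + \frac{h(\Sigma)-h(\pi\Sigma)}{\log\n},$$
\emph{provided} $\tfrac{1}{k}\log\#\G_k \to h(\Sigma)$ and $\tfrac{1}{n-k}\log\#\pi\G_{n-k} \to h(\pi\Sigma)$. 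The main obstacle I foresee is exactly this upgrade from the hypothesised limsup statement to a genuine limit: the liminf defining $\lbd$ requires the entropy estimates on $\#\G_k$ and $\#\pi\G_{n-k}$ to hold for \emph{all} sufficiently large $n$, not merely along a subsequence. I would address this via a Fekete-type supermultiplicativity argument on the free monoid over $\C$, handling potential non-unique decompositions by passing to a finite sub-family $\C' \subseteq \C$ realising growth rate arbitrarily close to $h(\Sigma)$, for which the associated generating function is rational and the growth rate is a bona fide limit. Applying the same upgrade to $\pi\G$ and inserting both into the approximate-square inequality yields $\lbd \Pi(\Sigma) \geq \frac{h(\pi\Sigma)}{\log\m} + \frac{h(\Sigma)-h(\pi\Sigma)}{\log\n}$, completing the proof.
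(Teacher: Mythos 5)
Your proposal follows essentially the same route as the paper: the upper bound is quoted from Proposition \ref{bounds}, and the lower bound comes from counting approximate squares indexed by $\G_{k}\times\pi\G_{n-k}$ via free concatenation of generators, with the limsup-to-all-scales obstacle resolved by supermultiplicativity of $\#\G_n$ --- exactly the paper's fix, which works along multiples of a fixed $m_\varepsilon$ realising the limsup and perturbs $k(\delta)$ and $l(\delta)$ by a bounded amount. The only quibble is your claim that passing to a finite subfamily $\C'$ yields a bona fide limit: this fails if all generator lengths share a common divisor, but the nearby-multiple adjustment (as in the paper) makes this immaterial.
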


Note that the example constructed in Theorem \ref{dimdrop} satisfies $h<h(\Sigma)$. A drawback of Theorem \ref{g-thm} is that in general it may not be straightforward to verify the equalities  $h=h(\Sigma)$ and $h_\pi=h(\pi \Sigma)$. However, under the assumption of unique decomposition of $\Sigma$ and $\pi \Sigma$ we provide a more practical way of checking that the conclusion of Theorem \ref{g-thm} holds. This is based on the fact that under the assumption of unique decomposition of $\Sigma$ and $\pi \Sigma$ (with respect to $\C$ and $\pi \C$), $h$ and $h_\pi$ can be understood as the Gurevic entropies of countable graphs associated with the coded subshifts $\Sigma$ and $\pi \Sigma$ (see Section \ref{coded}). This allows us to employ classical tools from the theory of countable Markov shifts which yields checkable criteria for Theorem \ref{g-thm} to hold, see Theorem \ref{fgeq1} below, whose statement requires the introduction of some further notation.

Let $\L_n$ denote all words of length $n$ in $\Sigma^*$ which appear at the beginning or end of some generator in $\C$, analogously $\pi \L_n$ are all words of length $n$ which appear at the beginning or end of some generator in $\pi \C$. We denote
\begin{eqnarray*}
\ell:=\limsup_{n \to \infty} \frac{1}{n} \log \# \L_n \quad \quad & \textnormal{and} & \quad \quad  \ell_\pi:=\limsup_{n \to \infty} \frac{1}{n} \log \# \pi\L_n.
\end{eqnarray*}

 Let $\C_n$ denote words in $\C$ of length $n \in \N$, analogously $\pi \C_n$ denotes words in $\pi \C$ of length $n$. Finally, define functions $f, f_\pi: [0, \infty) \to (0,\infty]$ by 
\begin{eqnarray} \label{f}
f(x)= \sum_{n=1}^\infty \# \C_ne^{-nx} \; \; \; \; & \textnormal{and} & \; \; \; \; f_{\pi}(x)= \sum_{n=1}^\infty \# \pi \C_n e^{-nx}.
\end{eqnarray}

\begin{thm} \label{fgeq1}
Suppose $\Sigma$ is a coded subshift such that $\Sigma$ and $\pi \Sigma$ have unique decomposition with respect to $\C$ and $\pi \C$ respectively. Additionally, assume $f(\ell)> 1$ and $f_{\pi}(\ell_\pi) >  1$. Then
$$ \bd \Pi(\Sigma)=\frac{h(\pi\Sigma)}{\log \m} + \frac{h(\Sigma) - h(\pi\Sigma)}{\log \n}.$$
\end{thm}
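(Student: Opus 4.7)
The plan is to reduce Theorem \ref{fgeq1} to Theorem \ref{g-thm} by showing that, under the stated hypotheses, $h = h(\Sigma)$ and $h_\pi = h(\pi\Sigma)$. The inequalities $h \leq h(\Sigma)$ and $h_\pi \leq h(\pi\Sigma)$ are immediate since $\G_n \subseteq \Sigma_n$ and $\pi\G_n \subseteq (\pi\Sigma)_n$, so the real work is in the reverse direction. Once these equalities are in hand, Theorem \ref{g-thm} produces the desired formula for $\bd\Pi(\Sigma)$.

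First I would verify that $\ell < h$ (and symmetrically $\ell_\pi < h_\pi$). Under unique decomposition, concatenations of generators are in bijection with finite tuples of generators, so the generating function of $\#\G_n$ is
\[
G(z) = \sum_{n\geq 1}\#\G_n\, z^n = \frac{F(z)}{1-F(z)}, \qquad F(z)=\sum_{n\geq 1}\#\C_n\, z^n,
\]
and one has $f(x) = F(e^{-x})$. The radius of convergence of $G$ is either that of $F$ (when $F$ never attains the value $1$ inside its disc) or the unique positive root of $F(z)=1$ (otherwise); in either situation one obtains $f(h) \leq 1$. Since $f$ is strictly decreasing wherever it is finite, the hypothesis $f(\ell) > 1 \geq f(h)$ forces $\ell < h$.

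Next, to bound $\#\Sigma_n$ from above I would use that $\Sigma$ is the closure of the freely concatenated generator sequences, so this set is dense in $\Sigma$. By shift-invariance and density, every $w \in \Sigma_n$ occurs as a prefix of some sequence $C_1 C_2 \cdots \in \Sigma$ with each $C_j \in \C$. A short case analysis (on whether the length-$n$ prefix terminates inside $C_1$ or extends through several complete generators and into the next one) yields the factorisation $w = v \cdot t$, where $v$ is a (possibly empty) concatenation of complete generators and $t$ is a (possibly empty) prefix of a single generator; therefore
\[
\#\Sigma_n \;\leq\; \sum_{i+j = n}\#\G_i \cdot \#\L_j,
\]
with the convention $\#\G_0 = \#\L_0 = 1$. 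Taking $\tfrac{1}{n}\log$ and letting $n \to \infty$, the exponential growth rate of the right hand side is $\max\{h,\ell\} = h$ by the previous step, so $h(\Sigma) \leq h$, and combined with $h \leq h(\Sigma)$ this gives $h = h(\Sigma)$. The identical argument applied to $\pi\Sigma$ and $\pi\C$ yields $h_\pi = h(\pi\Sigma)$, after which Theorem \ref{g-thm} closes the proof.

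The step I expect to be most delicate is the generating-function argument, specifically the case in which $F$ does not attain the value $1$ inside its disc of convergence: there one has to argue that the radius of $G$ still coincides with that of $F$ and that $f(h) \leq 1$ nonetheless holds as a boundary inequality. Everything else — the density-based factorisation, the elementary counting, and the reduction to Theorem \ref{g-thm} — is routine once this is in hand.
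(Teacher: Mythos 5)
Your overall strategy is the same as the paper's: show that the hypotheses force $h=h(\Sigma)$ and $h_\pi=h(\pi\Sigma)$ and then invoke Theorem \ref{g-thm}. Your derivation of $f(h)\leq 1$ from the identity $G(z)=F(z)/(1-F(z))$ (valid under unique decomposition) together with Pringsheim/Cauchy--Hadamard considerations is a legitimate, self-contained replacement for the paper's appeal to Vere-Jones's renewal-equation lemma --- it is the same renewal identity $G=F+FG$ in generating-function clothing, and the boundary case you flag ($F<1$ on its whole disc, so $f(h)=F(R_F)\leq 1$ by monotone convergence) does go through. The deduction $\ell<h$ from $f(\ell)>1\geq f(h)$ and strict monotonicity of $f$ is exactly the paper's.

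There is, however, one genuine error in the counting step. You claim that every $w\in\Sigma_n$ is a prefix of some freely concatenated sequence $C_1C_2\cdots$, and hence factorises as $w=v\cdot t$ with $v$ a concatenation of complete generators and $t$ a prefix of a generator, giving $\#\Sigma_n\leq\sum_{i+j=n}\#\G_i\#\L_j$. This is false: $\Sigma$ is the closure of $B=\{sc_{i_1}c_{i_2}\cdots\}$ where $s$ ranges over \emph{suffixes} of generators, so a word of $\Sigma_n$ may begin in the middle of a generator, and neither shift-invariance nor density realigns it to a generator boundary. Concretely, with a single generator $c_1=abc$ the word $ca$ lies in $\Sigma_2$ but admits no factorisation $v\cdot t$ of your form, and indeed $\#\Sigma_2=3$ while $\sum_{i+j=2}\#\G_i\#\L_j=2$; the same phenomenon occurs for generalised $S$-gap shifts satisfying the theorem's hypotheses. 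The correct decomposition is the three-term one used in the paper, $w=s'\cdot v\cdot t$ with $s'$ a suffix of a generator, yielding
\[
\#\Sigma_n\;\leq\;\sum_{i+j+k=n}\#\L_i\,\#\G_j\,\#\L_k .
\]
This repair costs nothing: your convolution growth-rate estimate applies verbatim to the three-fold convolution and still gives $h(\Sigma)\leq\max\{h,\ell\}=h$ since $\ell<h$. I would add that this direct limsup bound on the convolution is actually cleaner than the paper's own route (which divides through by $\#\Sigma_n$ and runs a tail estimate to extract indices $j$ with $\#\G_j\gtrsim\#\Sigma_j$); once the decomposition is fixed, your argument is correct and arguably preferable.
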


The usefulness of Theorem \ref{fgeq1} lies in the fact that $\# \C_n$, $\# \pi \C_n$, $\ell$ and $\ell_\pi$ are often easy to compute, which we demonstrate by applying it to generalised $S$-gap shifts in \S \ref{good}. We also note that Theorem \ref{fgeq1} can easily be adapted to allow $\pi \Sigma$ to be uniquely decomposing with respect to an arbitrary generating set $\C^\pi$ rather than $\pi \C$. In particular if  $\#\pi \C_n$ is replaced by $\# \C^\pi$ (words of length $n$ in $\C^\pi$) in the definition of $f_\pi$, then Theorem \ref{fgeq1} remains true under the assumption that $\pi \Sigma$ satisfies unique decomposition with respect to $\C^\pi$.

\section{Preliminaries}

We write $a \lesssim b$ to mean there exists a constant $C>0$ such that $a \leq Cb$.  The implicit constant $C$ may depend on parameters which are fixed in the hypotheses, such as $m, n$ and $\Sigma$, but crucially do not depend on variables in the proofs, such as the covering scale $\delta$. If we wish to emphasise that the $C$ depends on something else, not fixed in the hypothesis such as $\varepsilon$, then we write $a \lesssim_\varepsilon b$. Similarly, we write $a \gtrsim b$ to mean $b \lesssim a$ and $a \approx b$ to mean $a \lesssim b$ and $a \gtrsim b$ both hold (analogously $a \gtrsim_\varepsilon b$ and $a \approx_\varepsilon b$).  For $\i \in \Sigma_k$, we write $[\i]$ for the cylinder consisting of elements of $\Sigma$ with prefix $\i$.  We also refer to $\Pi([\i])$ as cylinders, although these are subsets of the fractal, rather than the symbolic space. Given $\i \in \Sigma$ or $\i \in \lang$ of length at least $n+1 \geq 2$ we let $\i|_n$ denote the truncation of $\i$ to its first $n$ digits. We also write $\# A$ to denote the cardinality of a (usually finite)  set $A$.
 
Let $\delta>0$. Throughout the paper we will let $k(\delta)$ denote the unique positive integer satisfying $\n^{-k(\delta)} \leq \delta < \n^{1-k(\delta)} $ and $l(\delta)$ denote the unique positive integer satisfying $\m^{-l(\delta)} \leq \delta < \m^{1-l(\delta)}$, noting that $k(\delta) < l(\delta)$ for sufficiently small $\delta$. Observe that by definition  $l(\delta) \approx \frac{-\log \delta}{\log \m}$ and $k(\delta) \approx \frac{-\log \delta}{\log \n}$ for sufficiently small $\delta$.

Here we prove the trivial lower and upper bounds that we alluded to in the introduction. The general strategy of relating covers to allowed words in $\Sigma$ and $\pi\Sigma$ will underpin all of our subsequent proofs, therefore we take care to include all of the details here.

\begin{prop} \label{bounds}
For all subshifts $\Sigma \subset \Delta_{\m,\n}^\N$,
\[
\max\left\{\frac{h(\pi\Sigma)}{\log \m}, \frac{h(\Sigma)}{\log \n}\right\} \leq  \hd \Pi(\Sigma) \leq \lbd \Pi(\Sigma)  \leq   \ubd \Pi(\Sigma) \leq \frac{h(\pi\Sigma)}{\log \m} + \frac{h(\Sigma) - h(\pi\Sigma)}{\log \n}.
\]
\end{prop}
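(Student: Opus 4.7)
The plan is to tackle the three non-trivial bounds separately; the inequalities $\hd \Pi(\Sigma)\leq \lbd \Pi(\Sigma)\leq \ubd \Pi(\Sigma)$ hold automatically for bounded subsets of Euclidean space. For the upper bound on $\ubd\Pi(\Sigma)$, I would cover $\Pi(\Sigma)$ at scale $\delta$ by \emph{approximate squares} of dimensions $\m^{-l(\delta)}\times\n^{-k(\delta)}$. Each level-$k(\delta)$ cylinder $\Pi([\j])$ is a rectangle of width $\m^{-k(\delta)}$ and height $\n^{-k(\delta)}$, which I would subdivide horizontally into columns of width $\m^{-l(\delta)}$; the columns that meet $\Pi(\Sigma)$ correspond precisely to extensions of $\pi\j$ by words $\mathbf{c}\in(\pi\Sigma)_{l(\delta)-k(\delta)}$ with $\pi\j\cdot\mathbf{c}\in(\pi\Sigma)_{l(\delta)}$. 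Hence the total number of approximate squares required is at most $\#\Sigma_{k(\delta)}\cdot\#(\pi\Sigma)_{l(\delta)-k(\delta)}$. Taking logarithms, dividing by $-\log\delta$, using $k(\delta)\approx -\log\delta/\log\n$ and $l(\delta)\approx -\log\delta/\log\m$, and applying the entropy bounds $\log\#\Sigma_n\leq n(h(\Sigma)+\varepsilon)$ for large $n$ (and similarly for $\pi\Sigma$), yields the claimed upper bound on passing $\varepsilon\to 0$.

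For $h(\Sigma)/\log\n\leq\hd\Pi(\Sigma)$, I would use a mass distribution argument. By the variational principle for topological entropy, there is an ergodic $\sigma$-invariant Borel probability measure $\mu$ on $\Sigma$ with $h(\mu)\geq h(\Sigma)-\varepsilon$. Shannon--McMillan--Breiman combined with Egorov's theorem produces a set $E\subseteq\Sigma$ with $\mu(E)\geq 1/2$ and $K_0\in\N$ such that $\mu([\i|_k])\leq e^{-k(h(\mu)-\varepsilon)}$ for every $\i\in E$ and $k\geq K_0$. Set $\nu=\mu|_E\circ\Pi^{-1}$. The key geometric observation is that a ball $B(x,r)$ of radius $r=\n^{-k}$ meets only $O(1)$ of the level-$k$ cylinders, since each such cylinder is an $\m^{-k}\times\n^{-k}$ rectangle with width $\m^{-k}\geq r$ and height $\n^{-k}=r$. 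For any hitting cylinder $\Pi([\j])$ with $[\j]\cap E\neq\emptyset$, picking $\i'\in[\j]\cap E$ yields $\mu([\j])=\mu([\i'|_k])\leq e^{-k(h(\mu)-\varepsilon)}$ by the defining property of $E$, while cylinders missing $E$ contribute nothing to $\nu$. Summing over the bounded number of hitting cylinders gives $\nu(B(x,r))\lesssim r^{(h(\mu)-\varepsilon)/\log\n}$ for all sufficiently small $r$ and all $x\in\Pi(E)$, so by the mass distribution principle $\hd\Pi(\Sigma)\geq\hd\Pi(E)\geq(h(\mu)-\varepsilon)/\log\n$. Letting $\varepsilon\to 0$ completes this bound.

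For $h(\pi\Sigma)/\log\m\leq\hd\Pi(\Sigma)$, I would use that $p(x,y)=x$ is Lipschitz with $p\circ\Pi=\Pi_1\circ\pi$, where $\Pi_1$ is the analogous coding map for the subshift $\pi\Sigma$ on $[0,1]$; hence $\hd\Pi(\Sigma)\geq\hd\Pi_1(\pi\Sigma)$, and running the preceding argument in one dimension (with $\n$ replaced by $\m$ throughout) delivers the bound. The main technical point throughout is the passage from a $\mu$-typical entropy bound to a uniform bound on $\mu([\j])$ for \emph{every} level-$k$ cylinder meeting the high-probability set $E$, which is handled by observing that a single point of $E$ in $[\j]$ already controls its $\mu$-measure; beyond this, the proof is careful bookkeeping with the scales $k(\delta)$ and $l(\delta)$ and the rectangular geometry of cylinders.
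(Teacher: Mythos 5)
Your proof is correct and follows essentially the same route as the paper: the identical cylinder-and-column count $\#\Sigma_{k(\delta)}\cdot\#\pi\Sigma_{l(\delta)-k(\delta)}$ for the upper bound, a Lipschitz-projection argument for the first lower bound, and a mass distribution argument via a measure of (near-)maximal entropy for the second. If anything your version is slightly more careful: the Shannon--McMillan--Breiman plus Egorov step supplies the uniform cylinder-mass bound that the paper simply asserts for its measure of maximal entropy, and you reprove the one-dimensional Furstenberg result where the paper cites it.
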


\begin{proof}
Fix $\varepsilon>0$ and $\delta>0$.

 We begin with the upper bound. Consider $\Sigma_{k(\delta)}$ and  consider covers of the level $k(\delta)$ cylinders, $\Pi([\i])$,  independently.  For $\i \in \Sigma_k$, write
\[
M(\i, l) = \# \pi(\j \in \Sigma_l : \j|_{k(\delta)}=\i)
\]
for the number of children of $\i$ at level $l >k(\delta)$ which lie in distinct columns.  Then
\begin{eqnarray*}
N_\delta(\Pi(\Sigma)) &\approx & \sum_{\i \in \Sigma_{k(\delta)}} N_\delta(\Pi([\i])) \\ \\
&\approx  &   \sum_{\i \in \Sigma_{k(\delta)}}  M(\i, l(\delta)) \\ \\
 &\leq   &   \sum_{\i \in \Sigma_{k(\delta)}}  \#\pi \Sigma_{l(\delta)-k(\delta)}  \qquad \text{(using shift invariance)}\\ \\
&= & \# \Sigma_{k(\delta)}  \#\pi \Sigma_{l(\delta)-k(\delta)}  \\ \\
&\lesssim_{\varepsilon} & \exp((h(\Sigma)+\varepsilon)k(\delta))  \exp((h(\pi \Sigma)+\varepsilon)(l(\delta)-k(\delta))). 
\end{eqnarray*}
In particular since $l(\delta) \approx \frac{-\log \delta}{\log \m}$ and $k(\delta) \approx \frac{-\log \delta}{\log \n}$ we have
\begin{eqnarray*}
\frac{\log N_\delta(\Pi(\Sigma))}{-\log \delta} &\lesssim_\varepsilon& \frac{(h(\Sigma)+\varepsilon)\frac{-\log \delta}{\log \n}}{-\log \delta}+  \frac{(h(\pi \Sigma)+\varepsilon)(\frac{-\log \delta}{\log \m}- \frac{-\log \delta}{\log \n})}{-\log \delta},
\end{eqnarray*}
therefore letting $\delta \to 0$ yields the desired upper bound since $\varepsilon>0$ was chosen arbitrarily.

For the lower bounds, first observe that $\lbd \Pi(\Sigma) \geq  \hd \Pi(\Sigma) \geq   \hd \pi\Pi(\Sigma)= \frac{h(\pi\Sigma)}{\log \m}$, where the second inequality follows since the projection $\pi:[0,1]^2 \to [0,1]$ to the first coordinate is Lipschitz, and the final equality follows from  Furstenberg's result expressing the Hausdorff dimension of a subshift in terms of entropy \cite{furstenberg}. To see the second lower bound, let $\mu$ be a measure of maximal entropy for $\Sigma$ projected onto $\Pi(\Sigma)$. 
  Let $\varepsilon>0$ be fixed and let $\delta>0$.  A ball of radius $\delta>0$ centred in $\Pi(\Sigma)$ intersects at most $\lesssim 1$ many level $k(\delta)$ cylinders each with mass
  $$\lesssim_\varepsilon \exp(-k(\delta)h(\Sigma)(1-\varepsilon)).$$
Therefore since  $k(\delta) \approx \frac{-\log \delta}{\log \n}$ we deduce that $\lbd \Pi(\Sigma) \geq \hd \Pi(\Sigma) \geq  \frac{h(\Sigma)}{\log \n}$ by the mass distribution principle, upon letting $\varepsilon \to 0$. 
\end{proof}

\section{Sofic $(\times \m, \times \n)$-invariant sets} \label{sofic}

Fix $\n>\m \geq 2$ and $\I \subseteq \Delta_{\m,\n}$. We say that a subshift $\Sigma$ of the full shift on $\I$ is \emph{sofic} if there exists a   labelled directed graph $G$  with a finite set of vertices $V$ and edges $E$, where each edge $e \in E$ has a label $\ell(e) \in \I$, such that  for each $\i \in \Sigma$, there exists an infinite path $e_1e_2 \ldots$ ($e_i \in E$) such that $\i=\ell(e_1)\ell(e_2) \ldots$. In this case we say that $G$ \emph{presents} $\Sigma$.  

Given a presentation $G$ of a sofic subshift $\Sigma$, there is a unique set of maximal irreducible subgraphs $\{G_1, \ldots, G_k\}$ of $G$, where by maximal we mean that no neighbouring vertices can be added to the subgraph while maintaining irreducibility. We call these the irreducible components of $G$. For each $1 \leq i \leq k$, define the subshift $\Sigma_{G_i}\subseteq \Sigma$ by
$$\Sigma_{G_i}=\{\ell(e_1)\ell(e_2) \ldots : e \in E_i\}$$
where $E_i$ denotes the set of edges in $G_i$. Note that $\pi \Sigma$ is a subshift which is presented by the labelled, directed graph $\pi G$, which is constructed from $G$ by projecting each label to its first coordinate. Its subgraphs $\pi G_i$ are irreducible components of $\pi G$.

Construct a labelled directed graph $H$ whose set of vertices is $\{1, \ldots, k\}$ and where there is an edge labelled $a$ from $i$ to $j$  if there is an edge labelled $a$ in $G$ from some vertex in $G_i$ to some vertex in $G_j$. Note that $H$ contains no cycles by definition of irreducible components. Then for each $1 \leq i \leq k$ we can define $\{i\}^+, \{i\}^- \subset \{1, \ldots, k\}$ by
\begin{eqnarray*}
\{i\}^+&:=& \{1 \leq j \leq k \; : \; \textnormal{there is a path in $H$ from $i$ to $j$}\}\\
 \{i\}^-&:=& \{1 \leq j \leq k \; : \; \textnormal{there is a path in $H$ from $j$ to $i$}\}
\end{eqnarray*}
noting that the definition of $\{i\}^+$ is equivalent to that provided in the introduction.
 We say that an irreducible component $G_i$ is a \emph{source} if $\{i\}^+=\{1, \ldots, k\}$ and we say that $G_i$ is a \emph{sink} if $\{i\}^-=\{1, \ldots, k\}$.

Before proving Theorem \ref{red-thm} we provide a couple of corollaries which follow from it. First, by exploiting the fact that $h(\Sigma)=\max\{h(\Sigma_{G_i})\}_{i=1}^k$ and $h(\pi\Sigma)=\max\{h(\pi\Sigma_{G_i})\}_{i=1}^k$, we can recover a simpler formula for the box dimension in the case that a source or sink has certain entropy maximising properties.




\begin{cor} \label{source} 
Let $G$ be a presentation of $\Sigma$ with irreducible components $\{G_1, \ldots, G_k\}$. Suppose that either:
\begin{enumerate}[(a)]
\item for some $1 \leq i\leq k$, $G_i$ is a source and $h(\Sigma_{G_i})=h(\Sigma)$ or 
\item  for some $1 \leq i\leq k$, $G_i$ is a sink and $h(\pi\Sigma_{G_i})=h(\pi\Sigma)$.
\end{enumerate}
 Then 
$$\dim_\textup{B} \Pi(\Sigma)= \frac{h(\pi\Sigma)}{\log \m} + \frac{h(\Sigma) - h(\pi\Sigma)}{\log \n} .$$
\end{cor}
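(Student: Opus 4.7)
The plan is to invoke Theorem \ref{red-thm} directly and show that, under either hypothesis, the maximum in \eqref{red} is attained at a value equal to the right-hand side of \eqref{usual}. As a preliminary, note that for a sofic subshift presented by $G$ with irreducible components $G_1,\dots,G_k$, any length-$n$ path in $G$ passes through at most $k$ irreducible components and makes at most $k-1$ transitions between them, so the total number of length-$n$ paths is comparable, up to a polynomial factor in $n$, to the maximum over $i$ of the number of length-$n$ paths in $G_i$. Consequently
$$h(\Sigma)=\max_{1\leq i\leq k} h(\Sigma_{G_i})\qquad\text{and}\qquad h(\pi\Sigma)=\max_{1\leq i\leq k} h(\pi\Sigma_{G_i}),$$
the second equality following by applying the same reasoning to the labelled graph $\pi G$, whose underlying directed graph (and hence irreducible component structure) agrees with that of $G$.

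Rewriting the target value as
$$\frac{h(\pi\Sigma)}{\log\m}+\frac{h(\Sigma)-h(\pi\Sigma)}{\log\n}=\frac{h(\Sigma)}{\log\n}+h(\pi\Sigma)\left(\frac{1}{\log\m}-\frac{1}{\log\n}\right),$$
and observing that $\frac{1}{\log\m}-\frac{1}{\log\n}>0$, each term inside the outer maximum of \eqref{red} is automatically bounded above by this quantity, since $h(\Sigma_{G_i})\leq h(\Sigma)$ and $\max_{j\in\{i\}^+}h(\pi\Sigma_{G_j})\leq h(\pi\Sigma)$. It therefore suffices to exhibit, in each case, a single index at which the corresponding term of \eqref{red} attains the upper bound.

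For case (a), I take the distinguished index to be the source $i$ itself. The source condition gives $\{i\}^+=\{1,\dots,k\}$, so by the entropy decomposition above $\max_{j\in\{i\}^+}h(\pi\Sigma_{G_j})=h(\pi\Sigma)$; combined with the hypothesis $h(\Sigma_{G_i})=h(\Sigma)$, the $i$-th term in \eqref{red} produces exactly the target value. For case (b), the sink $G_i$ need not be entropy-maximising in the first coordinate, so I instead select any $i^\ast$ with $h(\Sigma_{G_{i^\ast}})=h(\Sigma)$. Because $G_i$ is a sink, $i\in\{i^\ast\}^+$, and so
$$\max_{j\in\{i^\ast\}^+}h(\pi\Sigma_{G_j})\geq h(\pi\Sigma_{G_i})=h(\pi\Sigma),$$
whence the $i^\ast$-th term of \eqref{red} again matches the upper bound.

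The only step requiring care is the entropy decomposition across irreducible components. While standard for sofic shifts, it is the engine behind both cases: it converts the structural hypotheses (a source with maximal $h(\Sigma_{G_i})$, or a sink with maximal $h(\pi\Sigma_{G_i})$) into the identification of an index realising the two separate maxima $h(\Sigma)$ and $h(\pi\Sigma)$ simultaneously within the outer/inner max of \eqref{red}.
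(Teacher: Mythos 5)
Your proposal is correct and follows essentially the same route as the paper: both cases are settled by identifying, via the entropy decomposition $h(\Sigma)=\max_i h(\Sigma_{G_i})$, $h(\pi\Sigma)=\max_i h(\pi\Sigma_{G_i})$ (Lemma \ref{red-entropy}), a pair of indices realising both maxima simultaneously in \eqref{red}, with the source/sink hypothesis guaranteeing the required reachability. The only cosmetic difference is that you derive the matching upper bound directly from the formula \eqref{red}, whereas the paper simply cites the trivial upper bound of Proposition \ref{bounds}; both are valid.
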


Secondly, by \cite{kp-measures} we can describe which conditions guarantee (or preclude)  equality of the Hausdorff and box dimensions.

\begin{cor}\label{b=h}
The equality $\hd \Pi(\Sigma)=\bd \Pi(\Sigma)$ holds if and only if 
\begin{eqnarray}\label{b=max}
\bd \Pi(\Sigma) = \max_{1 \leq p \leq k}\left\{ \frac{h(\pi\Sigma_{G_p})}{\log \m} + \frac{h(\Sigma_{G_p}) - h(\pi\Sigma_{G_p})}{\log \n}\right\}
\end{eqnarray}
{\bf  and} the measure of maximal entropy on $\Sigma_{G_p}$ (for some $p$ which maximises the expression on the right hand side of \eqref{b=max}) projects to the measure of maximal entropy on $\pi \Sigma_{G_p}$.

In particular, if the maximum in \eqref{red} is not obtained for a pair $i=j$, that is,
\begin{eqnarray} 
\max_{1 \leq i \leq k}\left\{ \frac{h(\Sigma_{G_i})}{\log \n} + \max_{j \in \{i\}^+} h(\pi \Sigma_{G_j}) \left(\frac{1}{\log \m}-\frac{1}{\log \n}\right)\right\}>\max_{1 \leq p \leq k}\left\{ \frac{h(\pi\Sigma_{G_p})}{\log \m} + \frac{h(\Sigma_{G_p}) - h(\pi\Sigma_{G_p})}{\log \n}\right\},  \label{b>h}
\end{eqnarray}
then $\hd \Pi(\Sigma)<\bd \Pi(\Sigma)$.
\end{cor}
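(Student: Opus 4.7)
The plan is to use Theorem \ref{red-thm} for $\bd \Pi(\Sigma)$ in conjunction with the variational principle and Ledrappier--Young formula of Kenyon--Peres \cite{kp-measures} for $\hd \Pi(\Sigma)$, and to compare the two through a common intermediate quantity. The crucial observation is that the right hand side of \eqref{b=max} is precisely what the double maximum in \eqref{red} gives when restricted to the diagonal $i=j$, so it is automatically sandwiched between $\hd \Pi(\Sigma)$ and $\bd \Pi(\Sigma)$; the biconditional then amounts to asking when each of the two resulting inequalities is tight.

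First I would establish the reduction $\hd \Pi(\Sigma) = \max_{1 \leq p \leq k} \hd \Pi(\Sigma_{G_p})$. The direction $\geq$ is trivial. For $\leq$, the variational principle from \cite{kp-measures} furnishes an ergodic $(\times\m,\times\n)$-invariant measure $\mu$ achieving $\hd \mu = \hd \Pi(\Sigma)$; since the inter-component structure of $G$ is a DAG, after passing to a right-resolving presentation one sees that the support of such a $\mu$ must lie in a single $\Sigma_{G_p}$. Next, for each $p$ I would bound $\hd \Pi(\Sigma_{G_p})$ from above. Since $G_p$ is irreducible, both $\Sigma_{G_p}$ and $\pi \Sigma_{G_p}$ are transitive sofic, so each admits a unique measure of maximal entropy (the Parry measures), which I denote $\nu_p$ and $\bar{\nu}_p$. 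The Ledrappier--Young formula from \cite{kp-measures} gives, for any ergodic invariant $\mu$ on $\Sigma_{G_p}$,
\[
\dim_H \mu = \frac{h(\mu)}{\log \n} + h(\pi_*\mu)\left(\frac{1}{\log \m} - \frac{1}{\log \n}\right),
\]
and since both coefficients are positive and the two entropies are individually bounded by $h(\Sigma_{G_p})$ and $h(\pi \Sigma_{G_p})$, one obtains
\[
\hd \Pi(\Sigma_{G_p}) \leq \tfrac{h(\pi \Sigma_{G_p})}{\log \m} + \tfrac{h(\Sigma_{G_p}) - h(\pi \Sigma_{G_p})}{\log \n},
\]
with equality if and only if both entropy bounds are attained simultaneously by some ergodic $\mu$. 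By uniqueness of the Parry measure and upper semicontinuity of entropy on expansive subshifts, this occurs precisely when $\pi_*\nu_p = \bar{\nu}_p$.

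Combining the pieces yields
\[
\hd \Pi(\Sigma) \leq \max_{1 \leq p \leq k}\left\{\frac{h(\pi\Sigma_{G_p})}{\log \m} + \frac{h(\Sigma_{G_p}) - h(\pi\Sigma_{G_p})}{\log \n}\right\} \leq \bd \Pi(\Sigma),
\]
where the right-most inequality is obtained from Theorem \ref{red-thm} by restricting to $j=i=p$ and is an equality iff the maximum in \eqref{red} is attained on the diagonal, and the left inequality is an equality iff at some maximising $p$ we have $\pi_*\nu_p = \bar{\nu}_p$. Hence $\hd \Pi(\Sigma) = \bd \Pi(\Sigma)$ iff both conditions hold, proving the biconditional. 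The ``in particular'' clause is then immediate: if the max in \eqref{red} avoids the diagonal, the right-most inequality becomes strict, forcing $\hd \Pi(\Sigma) < \bd \Pi(\Sigma)$. I expect the main technical subtlety to be the ergodic-decomposition reduction to a single component, which is cleanest via a right-resolving presentation so that labels determine paths and the DAG structure on components can be used to argue that an ergodic invariant measure cannot straddle two components.
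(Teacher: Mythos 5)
Your proposal is correct and follows essentially the same route as the paper: both rest on the Kenyon--Peres variational principle with an ergodic maximiser, the Ledrappier--Young formula, the observation that an ergodic measure must be supported in a single irreducible component, and the sandwich of the diagonal quantity in \eqref{red} between $\hd \Pi(\Sigma)$ and $\bd \Pi(\Sigma)$. The only cosmetic difference is that you first package the Hausdorff side as $\max_p \hd \Pi(\Sigma_{G_p})$ with a per-component equality criterion, whereas the paper argues the two directions of the biconditional directly; the underlying ingredients and the tightness analysis of the two inequalities are identical.
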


We will prove Corollaries \ref{source} and \ref{b=h} following the proof of Theorem \ref{red-thm} in Section \ref{red-proofs}.

\subsection{Example}

Before providing the  proofs of the results of this section, we illustrate Theorem \ref{red-thm} with an example. Put $\n=5$ and  $\m=3$. Let $\Sigma$ be the subshift of finite type presented by the graph $G$ in Figure \ref{red-fig}.

\tikzset{every loop/.style={min distance=10mm,looseness=10}}

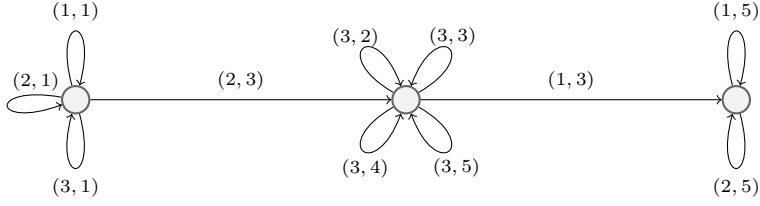
\begin{figure}[H]
\begin{center}
   \begin{tikzpicture}[
roundnode/.style={circle, draw=black!60, fill=black!5, thick, minimum size=2mm},]
        \node [roundnode] (1) {} ;
	\node[roundnode]      (2)       [right=4cm of 1] {};
	\node[roundnode]      (3)       [right=4cm of 2] {};

        \path[->] (1) edge  [in=200,out=170,loop] node[auto, xshift=-2pt, yshift=2pt] {\tiny $(2,1)$} ();
        \path[->] (1) edge  [loop above] node[auto] {\tiny $(1,1)$} ();
        \path[->] (1) edge  [loop below] node[auto] {\tiny $(3,1)$} ();

  \path[->]  (1) edge  [in=180,out=0] node[auto]  {\tiny $(2,3)$} (2);
 \path[->]  (2) edge  [in=180,out=0] node[auto]  {\tiny $(1,3)$} (3);

      \path[->] (3) edge  [loop below] node[auto] {\tiny $(2,5)$} ();
      \path[->] (3) edge  [loop above] node[auto] {\tiny $(1,5)$} ();

        \path[->] (2) edge  [in=70,out=30,loop] node[above,at start,anchor=south west, yshift=14pt] {\tiny $(3,3)$} ();
        \path[->] (2) edge  [in=-70,out=-30,loop] node[auto, xshift=-10pt, yshift=1pt] {\tiny $(3,5)$} ();
        \path[->] (2) edge  [in=110,out=150,loop] node[auto, xshift=10pt, yshift=-3pt] {\tiny $(3,2)$} ();
        \path[->] (2) edge  [in=-110,out=-150,loop] node[auto, xshift=-12pt, yshift=-14pt] {\tiny $(3,4)$} ();

    \end{tikzpicture}
\end{center} \caption{The graph $G$}  \label{red-fig}
\end{figure}

$G$ has three irreducible components $G_1, G_2, G_3$. $\Sigma_{G_1}$ is the full shift on $\{(1,1), (2,1), (3,1)\}$ and $h(\Sigma_{G_1})=h(\pi \Sigma_{G_1})=\log 3$. $\Sigma_{G_2}$ is the full shift on $\{(3,2), (3,3), (3,4), (3,5)\}$ and $h(\Sigma_{G_2})=\log 4$, $h(\pi \Sigma_{G_2})=0$. $\Sigma_{G_3}$ is the full shift on $\{(1,5), (2,5)\}$ and $h(\Sigma_{G_3})=h(\pi \Sigma_{G_3})=\log 2$.  

By Theorem \ref{red-thm},
\begin{eqnarray*}
\bd \Pi(\Sigma) &=& \max \left\{ \frac{\log 3}{\log 5} + \log 3\left(\frac{1}{\log 3}-\frac{1}{\log 5}\right), \frac{\log 4}{\log 5}+ \log 2\left(\frac{1}{\log 3}-\frac{1}{\log 5}\right), \right. \\
& & \left. \; \; \; \; \; \; \; \; \;\frac{\log2}{\log 5}+ \log 2\left(\frac{1}{\log 3}-\frac{1}{\log 5}\right)\right\} \\
&=& \max\left\{1, \log 2\left(\frac{1}{\log 5}+\frac{1}{\log 3}\right)\right\}=\log 2\left(\frac{1}{\log 5}+\frac{1}{\log 3}\right).
\end{eqnarray*}

Note that 
$$\bd  \Pi(\Sigma)< 1+\frac{\log 4- \log 3}{\log 5}=\frac{h(\pi\Sigma)}{\log 3} + \frac{h(\Sigma) - h(\pi\Sigma)}{\log 5}.$$ 
Also note that 
\begin{eqnarray*}
\bd  \Pi(\Sigma)&> &\max \left\{ \frac{\log 2}{\log 3}+\frac{\log 3- \log 2}{\log 5}, \frac{\log 4}{\log 5}, \frac{\log 2}{\log 3}\right\}\\
&=&\max_{1 \leq p \leq k}\left\{ \frac{h(\pi\Sigma_{G_p})}{\log 3} + \frac{h(\Sigma_{G_p}) - h(\pi\Sigma_{G_p})}{\log 5}\right\}.
\end{eqnarray*}

\subsection{Proofs} \label{red-proofs}

We begin by proving Theorem \ref{red-thm}. Fix a presentation $G$ of $\Sigma$ and let $1 \leq i \leq k$ and $j \in \{i\}^+$ be parameters which achieve the maximum in \eqref{red}. Roughly speaking, we show that the box dimension is exhausted by covering all regions $\Pi([\i])$ where $\i \in \Sigma_{l(\delta)}$ labels a path in $G$ which stays in the irreducible component $G_i$ for roughly $k(\delta)$ time steps before travelling to the irreducible component $G_j$ and staying inside it until time $l(\delta)$.

 Given a vertex $v$ in $G$, let $ \Sigma_n^{v+}$ denote all strings in $\Sigma_n$ which label a path beginning at $v$ and $ \Sigma_n^{v-}$  denote all strings in $\Sigma_n$ which label a path ending at $v$. For the lower bound we will require the following standard result which relates the entropy of an irreducible sofic subshift to paths in $G$. 

\begin{lma}
Let $\Sigma$ be an irreducible sofic subshift with irreducible presentation $G$. Then
\begin{equation}\lim_{n \to \infty} \frac{1}{n} \log\# \Sigma_n^{v \pm} =h(\Sigma). \label{pm} \end{equation}
\end{lma}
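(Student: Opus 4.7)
The upper bound $\limsup_{n\to\infty} \tfrac{1}{n}\log \#\Sigma_n^{v\pm} \leq h(\Sigma)$ is immediate from the inclusion $\Sigma_n^{v\pm} \subseteq \Sigma_n$, so the plan is entirely focused on the matching lower bound.

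My first step would be to record a monotonicity observation: $\#\Sigma_n^{v+} \leq \#\Sigma_{n+1}^{v+}$ for every $n$. Indeed, in any irreducible finite graph $G$ with at least one edge, every vertex has at least one outgoing edge, so each length-$n$ path starting at $v$ can be extended by one edge; distinct labels of length $n$ produce distinct labels of length $n+1$ under any such extension, and the claim follows. The symmetric statement $\#\Sigma_n^{v-} \leq \#\Sigma_{n+1}^{v-}$ holds by the same argument, using that every vertex of an irreducible graph has an incoming edge.

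Next, I would combine a pigeon-hole argument with irreducibility. Let $D$ denote the number of vertices of $G$, so that irreducibility guarantees any two vertices are joined by a path of length at most $D$. Every $\i \in \Sigma_n$ labels some path in $G$, so if we assign to each $\i$ a choice of starting vertex, pigeon-hole produces a vertex $u$ selected by at least $\#\Sigma_n/D$ words of $\Sigma_n$. Fixing a label $\j(u)$ of a path from $v$ to $u$ of length $d(u) \leq D$, the prefixing map $\i \mapsto \j(u)\i$ is an injection into $\Sigma_{n+d(u)}^{v+}$, giving $\#\Sigma_{n+d(u)}^{v+} \geq \#\Sigma_n/D$. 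Monotonicity then upgrades this to $\#\Sigma_{n+D}^{v+} \geq \#\Sigma_n/D$ for every $n$; taking logarithms, dividing by $n+D$, and letting $n \to \infty$ yields $\liminf_{m\to\infty} \tfrac{1}{m}\log \#\Sigma_m^{v+} \geq h(\Sigma)$, completing the $+$ case. The $-$ case is handled symmetrically by \emph{appending}, rather than prepending, a path to $v$ of bounded length.

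I expect the main obstacle to be the passage from a subsequence-type inequality (where the pigeon-hole output only controls $\#\Sigma_{n+d(n)}^{v+}$ for some $d(n) \leq D$ depending on $n$ and potentially leaving length-gaps) to a bound valid for every sufficiently large index; the monotonicity observation is precisely the device that uniformly fills these gaps, allowing us to conclude with a genuine limit rather than just a $\limsup$ along a subsequence.
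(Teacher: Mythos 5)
Your proof is correct and follows essentially the same route as the paper's: your pigeon-hole step is just a rephrasing of the inequality $\#\Sigma_n \leq \sum_{v \in V}\#\Sigma_n^{v\pm}$, and your "prefix/append a connecting path of bounded length plus monotonicity" step is exactly the paper's use of irreducibility to get $\#\Sigma_n^{v\pm} \geq \max_{w}\#\Sigma_{n-M}^{w\pm}$. The only difference is presentational: you make explicit the monotonicity of $n \mapsto \#\Sigma_n^{v\pm}$ that the paper leaves implicit.
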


\begin{proof}
Since $\# \Sigma_n^{v\pm} \leq \# \Sigma_n \leq \sum_{v \in V} \# \Sigma_n^{v \pm}$ we have that $\lim_{n \to \infty} \frac{1}{n} \log \left( \max_{v \in V} \# \Sigma_n^{v \pm} \right)$ exists and equals the entropy $h(\Sigma)$. By irreducibility of $G$, there exists $M \in \N$ such that for $n>M$, $\# \Sigma_n^{v\pm} \geq \max_{w \in V} \# \Sigma_{n-M}^{w \pm}$. Hence
$$\liminf_{n \to \infty} \frac{1}{n} \log \# \Sigma_n^{v \pm} \geq \lim_{n \to \infty} \frac{1}{n} \log \left( \max_{v \in V} \# \Sigma_n^{v \pm} \right)= h(\Sigma),$$
completing the proof of (\ref{pm}).
\end{proof}

\begin{proof}[Proof of lower bound in Theorem \ref{red-thm}]
Fix $\varepsilon>0$, $\delta>0$. Let $1 \leq i \leq k$ and $j \in \{i\}^+$ be the indices that maximise the expression in (\ref{red}). Let $v$ be a vertex in $G_i$ and $w$ be a vertex in $G_j$. Since $j \in \{i\}^+$, there exists a path of some length $N$ in $G$ from $v$ to $w$ which is labelled by $\k \in \Sigma_N$.  We may assume $\delta$ is small enough to ensure $k(\delta)>N$.  Given $\i \in\Sigma_{G_i, k(\delta)-N}^{v-}$, 
$$\bigcup_{ \j \in  \Sigma_{G_j, l(\delta)-k(\delta)}^{w+}}\Pi([\i\k\j]) \subseteq \Pi([\i]).$$
Therefore,
$$N_\delta(\Pi[\i]) \gtrsim N_\delta\left(\bigcup_{ \j \in  \Sigma_{G_j, l(\delta)-k(\delta)}^{w+}}\Pi([\i\k\j])\right) \approx\# \pi \Sigma_{G_j, l(\delta)-k(\delta)}^{w+}.  $$
Hence
$$N_\delta(\Pi(\Sigma)) \gtrsim \sum_{\i \in\Sigma_{G_i, k(\delta)-N}^{v-}} N_\delta([\i]) \gtrsim \#\Sigma_{G_i, k(\delta)-N}^{v-} \cdot \# \pi \Sigma_{G_j, l(\delta)-k(\delta)}^{w+} .$$
Therefore by \eqref{pm},
$$N_\delta(\Pi(\Sigma)) \gtrsim _\varepsilon \exp\left((h(\Sigma_{G_i})-\varepsilon)(k(\delta)-N)\right) \exp\left((h(\pi\Sigma_{G_j})-\varepsilon)(l(\delta)-k(\delta))\right)$$
and by letting $\delta \to 0$ we obtain the desired lower bound since $\varepsilon>0$ was arbitrary.
\end{proof}

For the upper bound we will require the standard result that the entropy of a sofic subshift equals the maximum entropy of its irreducible subshifts. 

\begin{lma}\label{red-entropy}
Let $\Sigma$ be a sofic subshift presented by a graph $G$ which has irreducible components $G_1, \ldots, G_k$. Then $h(\Sigma)=\max_{1 \leq i \leq k} h(\Sigma_{G_i})$. 
\end{lma}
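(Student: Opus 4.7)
The plan is to prove both inequalities separately, with the easy direction being the lower bound $h(\Sigma) \geq \max_i h(\Sigma_{G_i})$, which is immediate from $\Sigma_{G_i} \subseteq \Sigma$ and monotonicity of the word-counting entropy: $\#\Sigma_n \geq \#\Sigma_{G_i, n}$ for every $i$.

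For the upper bound, the key structural observation I would exploit is that, by maximality of the strongly connected subgraphs, once a directed path in $G$ leaves an irreducible component $G_i$ via an edge to a vertex in a different component, it cannot later return to $G_i$ (otherwise the vertices traversed along the way would all lie in a strongly connected set properly containing $G_i$). Consequently every finite directed path in $G$ traverses at most $k$ distinct components in some strict linear order.

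Given $\i \in \Sigma_n$, I would fix a presenting path $\gamma$ for $\i$ in $G$ and decompose it as $\gamma = \gamma_1 \, e_1 \, \gamma_2 \, e_2 \cdots e_{r-1} \, \gamma_r$, where each $\gamma_j$ lies entirely inside some component $G_{i_j}$ (with the indices $i_1,\dots,i_r$ distinct, $r \leq k$) and each $e_j$ is a transition edge between two different components. Reading labels, this gives a decomposition $\i = \i^{(1)} a_1 \i^{(2)} a_2 \cdots a_{r-1} \i^{(r)}$ with $\i^{(j)}$ in the length-$n_j$ language of $\Sigma_{G_{i_j}}$, $a_j \in \I$, and $n_1 + \cdots + n_r + (r-1) = n$. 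Fixing $\varepsilon > 0$, by the definition of entropy there exists $C > 0$ (depending on $\varepsilon$ and $G$) such that $\#\Sigma_{G_i, m} \leq C e^{(h(\Sigma_{G_i}) + \varepsilon) m}$ for every $i$ and every $m \geq 0$. Setting $h^* = \max_i h(\Sigma_{G_i})$, the total count becomes
\[
\# \Sigma_n \;\leq\; \sum_{r \leq k} \, k! \, \binom{n}{r-1} |\I|^{r-1} \, C^r \, e^{(h^* + \varepsilon)(n - r + 1)} \;\leq\; C_{k,\varepsilon} \, n^{k-1} \, e^{(h^* + \varepsilon) n}.
\]
Taking logarithms, dividing by $n$, and letting $n \to \infty$ yields $h(\Sigma) \leq h^* + \varepsilon$, and sending $\varepsilon \to 0$ finishes the argument.

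I do not anticipate a genuine obstacle here; the main care points are just (i) justifying rigorously that no path can revisit a component, which follows directly from maximality of the irreducible subgraphs, and (ii) keeping the combinatorial bookkeeping for $r$, the ordered tuple $(i_1,\ldots,i_r)$, the gap lengths $(n_1,\ldots,n_r)$, and the transition labels cleanly polynomial in $n$, so that the exponential rate is controlled solely by $h^*$.
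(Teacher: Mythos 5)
Your proof is correct and follows essentially the same route as the paper: both decompose a word of $\Sigma_n$ into blocks lying in successive irreducible components (using that a path cannot revisit a component, so at most $k$ blocks occur), bound each block count by $e^{(h(\Sigma_{G_i})+\varepsilon)m}$ up to a constant, and absorb the combinatorial bookkeeping into a factor polynomial in $n$. Your version is merely a little more explicit than the paper's about the transition edges and the no-return justification.
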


\begin{proof} It is only necessary to prove the upper bound which follows by bounding $\# \Sigma_n$ above by
$$\sum_{1 \leq m_1, \ldots, m_l \leq k} \;\sum_{n_{m_1}+ \cdots + n_{m_l}=n} \# \Sigma_{G_{m_1},n_{m_1}} \cdot \# \Sigma_{G_{m_2},n_{m_2}} \cdots \# \Sigma_{G_{m_l},n_{m_l}},$$
where $\Sigma_{G_i, n}$ denotes all distinct strings of length $n$ that appear in the subgraph $G_i$, and bounding $ \# \Sigma_{G_{m_i},n_{m_i}}$ in terms of $ h(\Sigma_{G_{m_i}})$.
\end{proof}

Let $\Sigma_{n}^{G_i+} $ denote all words in $\Sigma_n$ which label paths in $G$ that start at  any vertex in $G_i$, and $\Sigma_{n}^{G_i-} $ denote all words in $\Sigma_n$ which label paths in $G$ that end at  any vertex in $G_i$. 

\begin{proof}[Proof of upper bound in Theorem \ref{red-thm}]
Fix $\varepsilon>0$, $\delta>0$.  Fix any $1 \leq i \leq k$ and $\i \in \Sigma_n^{G_i-}$. Writing $ M(\i, l) = \# \pi(\j \in \Sigma_l : \j|_{k(\delta)}=\i)$ and noting that by shift invariance we have $M(\i,l(\delta)) \leq \# \pi \Sigma^{G_i+}_{l(\delta)-k(\delta)}$ it follows that
$$N_\delta(\Pi(\Sigma)) \lesssim  \sum_{i=1}^k \sum_{\i \in \Sigma_{k(\delta)}^{G_i-}} N_\delta(\Pi([\i])) \approx  \sum_{i=1}^k \sum_{\i \in   \Sigma_{k(\delta)}^{G_i-}} M(\i, l(\delta)) \lesssim \# \Sigma_{k(\delta)}^{G_i-} \# \pi \Sigma_{l(\delta)-k(\delta)}^{G_i+}.$$
Note that any path that ends at a vertex in $G_i$ is contained in the minimal subgraph $E$ of $G$ which contains the irreducible components $\{G_j\}_{j \in \{i\}^-}$ and all edges between these components. Similarly, any path in $G$ that begins at a vertex in $G_i$ is contained in the minimal subgraph $ F$ of $G$ where $F$ contains the irreducible components $\{ G_j\}_{j \in \{i\}^+}$, and all edges between these components. By Lemma \ref{red-entropy},  
$$\limsup_{n \to \infty} \frac{1}{n} \log\# \Sigma_{n}^{G_i-} \leq h(\Sigma_{E})=\max_{j \in \{i\}^-} h(\Sigma_{G_j})$$
 and 
$$\limsup_{n \to \infty} \frac{1}{n} \log\# \pi \Sigma_{n}^{G_i+} \leq h(\pi \Sigma_F)=\max_{j \in \{i\}^+}h(\pi \Sigma_{G_j}).$$ 
Therefore,
\begin{eqnarray*}
N_\delta(\Pi(\Sigma)) \lesssim_\varepsilon \exp\left(k(\delta)(\max_{j \in \{i\}^-} h(\Sigma_{G_j})+\varepsilon)\right) \exp\left((l(\delta)-k(\delta))(\max_{j \in \{i\}^+}h(\pi \Sigma_{G_j})+\varepsilon)\right),
\end{eqnarray*}
and by letting $\delta \to 0$ we obtain the desired upper bound since $\varepsilon>0$ was arbitrary.
\end{proof}

\begin{proof}[Proof of Corollary \ref{source} ] 
First, to see (a), let $G_i$ be the source. By assumption $h(\Sigma_{G_i})=h(\Sigma)$. By Lemma \ref{red-entropy} there exists $1 \leq j \leq k$ such that $h(\pi \Sigma_{G_j})=h(\pi \Sigma)$. Moreover $j \in \{i\}^+$ by definition of a source. Hence by (\ref{red}), 
$$ \dim_\textup{B} \Pi(\Sigma) \geq \frac{h(\Sigma_{G_i})}{\log \n} +  h(\pi \Sigma_{G_j}) \left(\frac{1}{\log \m}-\frac{1}{\log \n}\right)=\frac{h(\pi \Sigma)}{\log \m}+\frac{h(\Sigma)-h(\pi\Sigma)}{\log \n}.$$
On the other hand, the upper bound follows from Proposition \ref{bounds}, completing the proof of (a).

Similarly, to see (b), let $G_j$ be the sink. By assumption $h(\pi \Sigma_{G_j})=h(\pi \Sigma)$.  Also, by Lemma \ref{red-entropy} there exists $1 \leq i \leq k$ such that $h(\Sigma_{G_i})=h(\Sigma)$. Moreover $i \in \{j\}^-$ by definition of a sink. Hence by (\ref{red}),
$$ \dim_\textup{B} \Pi(\Sigma) \geq \frac{h(\Sigma_{G_i})}{\log \n} +  h(\pi \Sigma_{G_j}) \left(\frac{1}{\log \m}-\frac{1}{\log \n}\right)=\frac{h(\pi \Sigma)}{\log \m}+\frac{h(\Sigma)-h(\pi\Sigma)}{\log \n}.$$
The upper bound follows from Proposition \ref{bounds}, completing the proof of (b).
\end{proof}

\begin{proof}[Proof of Corollary \ref{b=h} ] 
First we recall that by \cite{kp-measures}, any ergodic invariant measure $\mu$ on $\Sigma$ satisfies the Ledrappier-Young formula:
\begin{equation}
 \hd \mu = \frac{h(\pi \mu)}{\log \m} + \frac{h(\mu)-h(\pi \mu)}{\log \n},  \label{ly}
\end{equation}
where $\hd \mu$ denotes the Hausdorff dimension of $\mu$, $h(\mu)$ denotes the measure-theoretic entropy of $\mu$ with respect to the left shift map on $\Sigma$ and $h(\pi \mu)$ denotes the measure-theoretic entropy of the pushforward measure $\pi \mu$ with respect to the left shift on $\pi \Sigma$. 

First, suppose the equality \eqref{b=max} holds and let $1 \leq p \leq k$ be an index that maximises the right hand side of \eqref{b=max}. Let $\mu$ be the ergodic invariant measure which maximises entropy on $\Sigma_{G_p}$, which we will assume projects to the measure which maximises entropy on  $\pi\Sigma_{G_p}$.  Then by \eqref{ly},
\[
\hd \Pi(\Sigma) \geq \hd \mu = \frac{h(\pi \mu)}{\log \m} + \frac{h(\mu)-h(\pi \mu)}{\log \n}  = \bd \Pi(\Sigma) 
\]
by \eqref{b=max}.  

For the converse, we assume that $\bd \Pi(\Sigma)=\hd \Pi(\Sigma)$. By \cite{kp-measures} there exists an ergodic invariant measure $\mu$ of maximal Hausdorff dimension. \footnote{The statement of \cite[Theorem 1.1]{kp-measures} does not make explicit that a measure of maximal Hausdorff dimension can be taken to be ergodic, however this is clear from its proof.}  Since $\mu$  is ergodic, its support must be contained in $\Sigma_{G_i}$ for some irreducible component $G_i$ of $G$.  Therefore, using \eqref{ly} we obtain
\begin{eqnarray*}
\hd \Pi(\Sigma) = \hd \mu &=&   \frac{h(\pi \mu)}{\log \m} + \frac{h(\mu)-h(\pi \mu)}{\log \n} \\ 
 &\leq & \max_{1 \leq p \leq k}\left\{ \frac{h(\pi \Sigma_{G_p})}{\log \m} + \frac{h(\Sigma_{G_p})-h(\pi \Sigma_{G_p})}{\log \n}\right\} \\ 
&\leq& \bd \Pi(\Sigma).
\end{eqnarray*}
Now, if \eqref{b=max} does not hold, then the second inequality above is strict and thus we get a contradiction. On the other hand, if \eqref{b=max} holds but the measure of maximal entropy $\mu_p$ on $\Sigma_{G_p}$ does \emph{not} project to the measure of maximal entropy on $\pi \Sigma_{G_p}$ for any $1 \leq p \leq k$ that maximises the right hand side of \eqref{b=max}, then the first inequality above is strict yielding a contradiction and completing the proof.
\end{proof}

\section{Coded subshifts} \label{coded}

Fix $\n>\m \geq 2$ and $\I \subseteq \Delta_{\m,\n}$. Let $\C=\{c_i\}_{i=1}^\infty$ be a countable family of words on the alphabet $\I$. We call $\C$ the generators. Let $\C_n:=\C \cap \I^n$. Define
$$B:=\{sc_{i_1}c_{i_2} \ldots \; : \; c_{i_j} \in \C, \; \textnormal{$s$ is a suffix of a word in $\C$}\}.$$
Note that $B$ is $\sigma$-invariant but may not be compact. We define $\Sigma=\overline{B}$ and say that $\Sigma$ is a coded subshift. Note that $\pi \Sigma$ is also a coded subshift which is generated by $\pi \C$. Recall that we say that the coded subshift $\Sigma$ satisfies \emph{unique decomposition with respect to $\C$} if no finite word in $\Sigma^*$ can be written by concatenating generators in $\C$ in distinct ways. Note that if $\Sigma$ satisfies unique decomposition with respect to $\C$, this does not necessarily mean that $\pi \Sigma$  satisfies unique decomposition with respect to $\pi\C$, although it may satisfy unique decomposition with respect to a different generating set (for instance if $\Sigma$ satisfies unique decomposition with respect to $\C$ and $\{(1,2), (1,3), (1,4)(1,4)\} \subset \C$ then since $\{1, 11\} \subset \pi \C$, $\pi \Sigma$ does not satisfy unique decomposition with respect to $\pi \C$). 

Construct a directed labelled graph by fixing a vertex $v$ and, for each $i \in \N$, adding a path which begins and ends at $v$ which is labelled by the generator $c_i$, such that the paths do not intersect each other apart from at the start and end points. We call these generating loops. We say that $G$ presents the coded subshift $\Sigma$.\footnote{Note that this notion of the presentation of a coded subshift differs from the notion of the presentation of a sofic subshift. If $\Sigma$ is sofic then all infinite sequences in $\Sigma$ label an infinite path in its presentation, whereas if $\Sigma$ is coded then this is need not be the case (i.e. if $\Sigma \setminus B \neq \emptyset$).} Similarly, construct the graph $\pi G$ from $G$ by projecting each label to its first coordinate and removing any generating loop which bears the same sequence of labels as another generating loop (so that each generating loop is labelled uniquely by a generator in $\pi \C$). Then $\pi G$ presents the coded subshift $\pi \Sigma$.

Let $\G_n$ denote all words in $\Sigma_n$ which label a path in $G$ that begins and ends at the vertex $v$, and $\G=\bigcup_{n=1}^\infty \G_n$. In particular, $\G$ consists of concatenations of generators. Analogously, $\pi \G$ are all words in $\pi \Sigma$ which label a path in $\pi G$ that begins and ends at the vertex $v$. We denote
\begin{eqnarray*}
h:=\limsup_{n \to \infty} \frac{1}{n} \log \# \G_n \quad \quad & \textnormal{and} & \quad \quad  h_\pi:=\limsup_{n \to \infty} \frac{1}{n} \log \# \pi\G_n.
\end{eqnarray*}
Note that the $\limsup$ is necessary in the definitions above, for instance consider a coded subshift generated by a set of generators which all have even length. Also, note that these definitions are equivalent to those recorded in the introduction. 

We begin by proving Theorem \ref{g-thm}, namely that if $h=h(\Sigma)$ and $h_\pi=h(\pi \Sigma)$ then $\bd \Pi(\Sigma)$ equals its trivial upper bound.

\begin{proof}[Proof of Theorem \ref{g-thm}]
By Proposition \ref{bounds} it suffices to prove the lower bound. Fix $\varepsilon>0$. Since $h=h(\Sigma)$ and $h_\pi=h(\pi \Sigma)$ we can choose $m_\varepsilon, n_\varepsilon \in \N$ such that 
$$\# \G_{m_\varepsilon} \geq e^{m_\varepsilon (h(\Sigma)-\varepsilon)} \;\;\; \textnormal{and} \;\;\;\# \pi\G_{n_\varepsilon} \geq e^{n_\varepsilon( h(\pi\Sigma)-\varepsilon)}.$$
In particular, for all $k \in \N$,
$$\# \G_{km_\varepsilon} \geq e^{km_\varepsilon (h(\Sigma)-\varepsilon)} \;\;\; \textnormal{and} \;\;\;\# \pi\G_{kn_\varepsilon} \geq e^{kn_\varepsilon (h(\pi\Sigma)-\varepsilon)}$$
since $\#\G_{kn} \geq (\# \G_n)^k$ and $\#\pi\G_{kn} \geq (\#\pi \G_n)^k$.
Let $\delta>0$ be sufficiently small that $k(\delta) \geq 2m_\varepsilon$ and $l(\delta)-k(\delta) \geq 2n_\varepsilon$. Hence we can find $k(\delta)-m_\varepsilon<k'(\delta) \leq k(\delta)$ which is a multiple of $m_\varepsilon$, that is,
$$\#\G_{k'(\delta)} \geq e^{k'(\delta)(h(\Sigma)-\varepsilon)}.$$
Similarly we can find $l(\delta)-n_\varepsilon<l'(\delta) \leq l(\delta)$ such that $l'(\delta)-k'(\delta)$ is a multiple of $n_\varepsilon$, that is,
$$\#\pi\G_{l'(\delta)-k'(\delta)} \geq e^{(l'(\delta)-k'(\delta))(h(\pi\Sigma)-\varepsilon)}.$$
  Denoting $M(\i,l)=\# \pi(\j \in \Sigma_l : \j|_{k'(\delta)}=\i)$, we have
\begin{eqnarray*}
N_\delta(\Pi(\Sigma)) \gtrsim \sum_{\i \in \G_{k'(\delta)}} N_\delta(\Pi([\i]))& \gtrsim& \sum_{\i \in \G_{k'(\delta)}} M(\i,l'(\delta))\\
&\geq & \# \G_{k'(\delta)} \# \pi\G_{l'(\delta)-k'(\delta)} \\
&\geq& e^{k'(\delta)(h(\Sigma)-\varepsilon)}  e^{(l'(\delta)-k'(\delta))(h(\pi\Sigma)-\varepsilon)} \\
&\gtrsim_\varepsilon&  e^{k(\delta)(h(\Sigma)-\varepsilon)}  e^{(l(\delta)-k(\delta))(h(\pi\Sigma)-\varepsilon)} .
\end{eqnarray*}
The lower bound follows since $\varepsilon$ was chosen arbitrarily.
\end{proof}

Conversely, examples can be constructed where either $h< h(\Sigma)$ or $h_\pi < h(\pi \Sigma)$ and the conclusion of Theorem \ref{g-thm} does \emph{not} hold, that is, the dimension $\bd \Pi(\Sigma)$ drops from the trivial upper bound. In particular, in \S \ref{sdimdrop} we will construct an example where $h< h(\Sigma)$ and $\bd \Pi(\Sigma)$ equals the trivial lower bound $\max\left\{\frac{h(\Sigma)}{\log \n}, \frac{h(\pi \Sigma)}{\log \m}\right\}$ thereby settling Theorem \ref{dimdrop}.

The drawback of Theorem \ref{fgeq1} is that generally it is not straightforward to verify the equalities $h=h(\Sigma)$ and $h_\pi=h(\pi\Sigma)$. However, under the assumption of unique decomposition of $\Sigma$ and $\pi \Sigma$ we can provide more checkable conditions that guarantee the box dimension $\bd \Pi(\Sigma)$ to equal its trivial upper bound (Theorem \ref{fgeq1}).

\subsection{Coded subshifts with unique decomposition}

Throughout this short section we will assume that $\Sigma$ is a coded subshift with unique decomposition with respect to $\C$ and that the coded subshift $\pi \Sigma$ satisfies unique decomposition with respect to $\pi \C$. Let $G$ and $\pi G$ be the presentations of $\Sigma$ and $\pi \Sigma$ as detailed in the previous section. Let $p_G(v,n)$ denote the number of paths of length $n$ in $G$ which begin and end at $v$ and $p_{\pi G}(v,n)$ denote the number of paths of length $n$ in $\pi G$ which begin and end at $v$ and write
\begin{eqnarray*}
h_G:=\limsup_{n \to \infty} \frac{1}{n} \log p_G(v,n) \quad \quad & \textnormal{and} & \quad \quad  h_{\pi G}:=\limsup_{n \to \infty} \frac{1}{n} \log \# p_{\pi G}(v,n).
\end{eqnarray*}

In particular, $h_G$ is the Gurevic entropy of $G$ and $h_{\pi G}$ is the Gurevic entropy of $\pi G$, noting that the limsups are actually independent of the choice of vertex. Since $\Sigma$ and $\pi \Sigma$ satisfy unique decomposition with respect to $\C$ and $\pi \C$ respectively, we have $h=h_G$ and $h_\pi=h_{\pi G}$. This will enable us to apply techniques from the theory of countable Markov shifts.

Recall from the introduction the functions $f, f_\pi: [0, \infty) \to (0,\infty]$ which we defined by 
\begin{eqnarray} \label{f2}
f(x)= \sum_{n=1}^\infty \#\C_n e^{-nx} \; \; \; \; & \textnormal{and} & \; \; \; \; f_{\pi}(x)= \sum_{n=1}^\infty \#\pi \C_n e^{-nx}.
\end{eqnarray}

We can apply the classical work of Vere-Jones \cite{vj} to deduce behaviour of $f$ and $f_\pi$ at $h$ and $h_\pi$.

\begin{lma}
Let $\Sigma$ and $\pi \Sigma$ be coded subshifts with unique decomposition with respect to generating sets $\C$ and $\pi \C$ respectively. Then
\begin{eqnarray} \label{vj-eqn}
f(h)\leq 1 \; \; \; \; & \textnormal{and} & \; \; \; \; f_{\pi}(h_\pi)\leq 1 .
\end{eqnarray}
\end{lma}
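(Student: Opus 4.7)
The plan is to reduce each inequality to a renewal-type identity for the loop-counting generating function based at the distinguished vertex $v$, and then compare its radius of convergence to the exponential growth rate $h$ (respectively $h_\pi$). Since the arguments for $f$ and $f_\pi$ are identical after replacing $(\Sigma, G, \C)$ by $(\pi\Sigma, \pi G, \pi\C)$, I focus on $f(h) \leq 1$.

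First, I would use unique decomposition to obtain the combinatorial identity
\[
p_G(v,n) = \sum_{k=1}^\infty \sum_{\substack{n_1 + \cdots + n_k = n \\ n_i \geq 1}} \prod_{j=1}^k \#\C_{n_j}.
\]
This is the key step: by construction of $G$, every loop at $v$ is a concatenation of generating loops, and by the unique decomposition hypothesis the resulting tuple of generators is determined by the loop. Multiplying by $e^{-nx}$ and summing over $n \geq 1$ produces the renewal identity
\[
\sum_{n=1}^\infty p_G(v,n)\, e^{-nx} \;=\; \sum_{k=1}^\infty f(x)^k,
\]
understood as an equality in $[0,\infty]$.

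Second, I would invoke the observation already made in the paragraph preceding the lemma that unique decomposition forces $h = h_G$. Since $h_G = \limsup_n \tfrac{1}{n}\log p_G(v,n)$, the left-hand side above is finite for every $x > h$.

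Finally, suppose for contradiction that $f(h) > 1$. Because $f$ is a power series in $e^{-x}$ with non-negative coefficients, it is non-increasing in $x$, and some partial sum $\sum_{n \leq N} \#\C_n e^{-nh}$ already exceeds $1$; by continuity of each such partial sum, the same partial sum remains $> 1$ for $x$ slightly larger than $h$, giving an $x' > h$ with $f(x') > 1$. Then $\sum_{k=1}^\infty f(x')^k = \infty$, and the renewal identity forces $\sum_n p_G(v,n) e^{-nx'} = \infty$, contradicting $x' > h = h_G$. Hence $f(h) \leq 1$. The main (and essentially only nontrivial) obstacle is making the combinatorial bijection in the first step rigorous; once that is in place the argument is a two-line comparison of abscissas of convergence, i.e., the single-state case of Vere-Jones' renewal theorem.
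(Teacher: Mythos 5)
Your argument is correct. The paper's own proof is essentially a citation: it records the first-return recurrence $p_G(v,n)=\sum_{k=1}^n p_G(v,n-k)q_G(v,k)$ with $q_G(v,k)=\#\C_k$, invokes Vere-Jones \cite[Lemma 2]{vj} to get $\sum_n q_G(v,n)e^{-nh_G}\leq 1$, and then uses unique decomposition only to identify $h$ with the Gurevic entropy $h_G$. You instead prove the needed inequality from scratch: the identity $\sum_n p_G(v,n)e^{-nx}=\sum_{k\geq 1}f(x)^k$ in $[0,\infty]$, convergence of the left-hand side for $x>h_G$, and continuity of the partial sums of $f$ yield a clean contradiction from $f(h)>1$. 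This is, as you note, exactly the single-state case of Vere-Jones' lemma, so the underlying renewal structure is the same; what your version buys is a self-contained elementary argument that avoids the renewal theorem entirely, at the cost of a few lines. One bookkeeping remark: the combinatorial identity for $p_G(v,n)$ (which counts \emph{paths}) holds without any unique decomposition hypothesis, because the generating loops meet only at $v$ by construction, so every loop at $v$ factors uniquely into first-return loops; unique decomposition is genuinely needed only where you invoke it in your second step, to ensure distinct paths carry distinct labels and hence $h=h_G$ (in general one only has $h_G\geq h$). With that understood, your proof is complete and correct.
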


\begin{proof}
Let $q_G(v,n)$ denote the number of generating loops of length $n$ in $G$. Let $q_{\pi G}(v,n)$ denote the number of generating loops of length $n$ in $\pi G$. In particular, $q_G(v,n)=\#\C_n$ and $q_{\pi G}(v,n)=\# \pi \C_n$, so $f(x)=\sum_{n=1}^\infty q_G(v,n) e^{-nx}$ and $f_\pi(x)=\sum_{n=1}^\infty q_{\pi G}(v,n) e^{-nx}$. By using the recurrence relation $p(v,n)=\sum_{k=1}^n p_G(v, n-k) q_G(v,k)$ and an application of a renewal theorem, Vere-Jones \cite[Lemma 2]{vj} showed that $\sum_{n=1}^\infty q_G(v,n)e^{-nh_G } \leq 1$, and analogously $\sum_{n=1}^\infty q_{\pi G}(v,n)e^{-nh_{\pi G} } \leq 1$. This implies the result since  $h=h_G$ and $h_\pi=h_{\pi G}$ by unique decomposition.
\end{proof}

Next recall the definitions from the introduction
\begin{eqnarray*}
\ell:=\limsup_{n \to \infty} \frac{1}{n} \log \# \L_n \quad \quad & \textnormal{and} & \quad \quad  \ell_\pi:=\limsup_{n \to \infty} \frac{1}{n} \log \# \pi\L_n
\end{eqnarray*}
where $\L_n$ and $\pi \L_n$ denote words of length $n$ which appear at the beginning or end of generators in $\C$ and $\pi \C$ respectively. In \cite{ct} it was shown that $\ell<h$ implies existence of a measure of maximal entropy for the coded subshift $\Sigma$. The behaviour of $f$ at a quantity related to $\ell$ was used in \cite{pavlov} to characterise coded subshifts in terms of the properties of their measures of maximal entropy.

To prove Theorem \ref{fgeq1} we will show that $f(\ell) > 1$ implies $\ell < h$ by using \eqref{vj-eqn} and the fact that $f$ is strictly decreasing, and then by naturally decomposing words in $\Sigma$ into concatenations of generators and subwords of generators we will deduce that this implies $h=h(\Sigma)$ (respectively $h_\pi=h(\pi \Sigma)$).

\begin{lma} \label{g lemma}
Suppose $\Sigma$ is a coded subshift which satisfies unique decomposition with respect to a generating set $\C$ and $f(\ell) > 1$. Then 
$$\limsup_{n \to \infty} \frac{1}{n} \log \#\G_n= h(\Sigma).$$
\end{lma}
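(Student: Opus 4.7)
Since $\G_n\subseteq \Sigma_n$, the inequality $h\leq h(\Sigma)$ is immediate, and the content of the lemma is the reverse direction, which I would prove by showing $\#\Sigma_n\lesssim_\varepsilon e^{n(h+\varepsilon)}$ for every $\varepsilon>0$. The starting point is a spectral gap $\ell<h$. The preceding lemma gives $f(h)\leq 1$ and the hypothesis gives $f(\ell)>1$, so strict monotonicity of $f$ wherever it is finite forces $\ell<h$. Since every generator is itself a prefix (and a suffix) of a generator, $\#\C_k\leq \#\L_k$, and hence $\ell_\C\leq \ell<h$; in particular $f$ is analytic in a neighbourhood of $h$. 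Combined with $f(h)\leq 1$ and the identity $\#\G_n=[z^n]\,\tfrac{1}{1-C(z)}$ coming from unique decomposition (where $C(z)=\sum_k \#\C_k z^k$), classical Vere-Jones renewal theory then forces the positive-recurrence condition $f(h)=1$ together with $-f'(h)=\sum_k k\#\C_k e^{-kh}<\infty$.

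I would then decompose each $w\in \Sigma_n$ using the presentation $G$. The language of $\Sigma=\overline{B}$ equals that of $B$, so $w$ labels a path of length $n$ in $G$. If this path visits the distinguished vertex $v$ at some time, we may split $w=\alpha\beta\gamma$ where $\alpha$ is a suffix of a generator (the portion up to the first visit to $v$), $\beta\in \G$ is the concatenation of full loops between the first and last visits, and $\gamma$ is a prefix of a generator; any of $\alpha,\beta,\gamma$ may be empty. Otherwise the path stays inside a single generator loop and $w$ is a proper interior subword of some $c\in\C$ with $|c|\geq n+2$. Writing $\mathrm{reg}_n$ and $\mathrm{int}_n$ for the two contributions, one has $\#\Sigma_n\leq \mathrm{reg}_n+\mathrm{int}_n$, with
\[
\mathrm{reg}_n\leq \sum_{a+b+c=n}\#\L_a\cdot\#\G_b\cdot\#\L_c.
\]
Using $\#\L_n\lesssim_\varepsilon e^{n(\ell+\varepsilon)}$ and $\#\G_n\lesssim_\varepsilon e^{n(h+\varepsilon)}$, the generating functions $\sum_n \#\L_n z^n$ and $\sum_n \#\G_n z^n$ both converge at $z=e^{-(h+\varepsilon)}$ whenever $\ell+\varepsilon<h$, and non-negativity of coefficients then yields $\mathrm{reg}_n\lesssim_\varepsilon e^{n(h+\varepsilon)}$.

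The main obstacle is the interior count $\mathrm{int}_n$. The crude combinatorial over-count $\sum_{k\geq n+2}(k-n-1)\#\C_k$ can be infinite in general, so one cannot use it directly; instead, the positive-recurrence input $-f'(h)<\infty$ together with the exponential decay $\#\C_k\lesssim_\varepsilon e^{k(\ell+\varepsilon)}$ afforded by $\ell<h$ must be used to control the number of \emph{distinct} interior subwords. This is where unique decomposition plays its essential role, reducing the problem to classical countable Markov shift theory: positive recurrence at $h$ on the one-vertex presentation $G$ forces $h(\Sigma)$ to coincide with the Gurevic entropy $h$. With $\mathrm{int}_n\lesssim_\varepsilon e^{n(h+\varepsilon)}$ in hand, combining the estimates gives $\#\Sigma_n\lesssim_\varepsilon e^{n(h+\varepsilon)}$ up to polynomial factors, and letting $\varepsilon\to 0$ yields $h(\Sigma)\leq h$, completing the proof.
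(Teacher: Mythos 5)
Your opening move — deducing $\ell<h$ from $f(\ell)>1\geq f(h)$ and monotonicity of $f$ — is exactly the paper's, and your treatment of the ``regular'' words is a legitimate alternative to the paper's: where you observe that $\sum_n\#\L_nz^n$ and $\sum_n\#\G_nz^n$ both converge at $z=e^{-(h+\varepsilon)}$ and read off $\mathrm{reg}_n\lesssim_\varepsilon e^{n(h+\varepsilon)}$ from the triple convolution, the paper instead divides $\#\Sigma_n\leq\sum_{i+j+k=n}\#\L_i\#\G_j\#\L_k$ by $\#\Sigma_n$, uses submultiplicativity of $\#\Sigma_n$, and truncates to extract $\#\G_j\geq c\,\#\Sigma_j$ along a syndetic set of $j$. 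Either mechanism works for that piece.

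The gap is precisely where you flag it, and your proposed fix does not close it. Having correctly observed that the crude count $\sum_{k>n}(k-n-1)\#\C_k$ of interior subwords diverges, you dispose of $\mathrm{int}_n$ by asserting that positive recurrence of the one-vertex loop graph forces $h(\Sigma)$ to equal the Gurevic entropy $h_G=h$. That assertion is essentially the statement being proved, and it is false as a general principle: Gurevic entropy only sees the loop system, whereas $\Sigma=\overline{B}$ contains sequences that never return to $v$. For instance, take generators $\C=\{a^{2^k}wa^{2^k}\;:\;w\in\{b,c\}^k,\ k\geq1\}$. This has unique decomposition, $C(z)=\sum_k2^kz^{2^{k+1}+k}$ has radius of convergence $1$ with $C(1^-)=\infty$, so the loop graph is positive recurrent with $h_G=x^*$ determined by $C(e^{-x^*})=1$ and $-C'(e^{-x^*})<\infty$; yet $\{b,c\}^n\subseteq\Sigma_n$ for every $n$ (these are interior subwords of single generators), so $h(\Sigma)\geq\log2$, and one checks $x^*<\log 2$. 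So positive recurrence plus $-f'(h)<\infty$ cannot control $\mathrm{int}_n$: in this example $\mathrm{int}_n\gtrsim 2^n$ strictly dominates $\mathrm{reg}_n$. What must enter the interior estimate is the quantity $\ell$ itself — the hypothesis $f(\ell)>1$ is exactly what rules out generators whose interiors carry more entropy than the loop system (in the example above, once interior subwords are accounted for in $\L_n$ one finds $\ell\geq\log 2$ and $f(\ell)<1$, so the hypothesis fails). Your argument never uses $\ell$ or $\#\L_n$ in the interior step, only $\#\C_k$ and recurrence data of the loop graph, and no bound of that form can work. The paper avoids isolating an interior term at all by folding everything into the single inequality $\#\Sigma_n\leq\sum_{i+j+k=n}\#\L_i\#\G_j\#\L_k$; whichever route you take, the count of \emph{distinct} length-$n$ subwords of generators must be bounded by an $e^{n(\ell+\varepsilon)}$-type quantity, and that is the missing ingredient in your write-up.
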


\begin{proof}
Assume that $f(\ell) >1$. Since $h_G=h$ by unique decomposition it follows that $f(h)=f(h_G) \leq 1$ by \eqref{vj-eqn} and therefore since $f$ is strictly decreasing we have $\ell<h_G=h \leq h(\Sigma)$ (the second inequality follows trivially from the definition of $\G$). We will show that $\ell < h(\Sigma)$ implies that $h=h(\Sigma)$, using arguments similar to those contained in  \cite[\S 5.1]{ct}.

Let $\varepsilon>0$ be sufficiently small that $\ell-h(\Sigma)+2\varepsilon<0$. 

Then
\begin{eqnarray}
\# \Sigma_n &\leq& \sum_{i+j+k=n} \# \L_i \# \G_j \# \L_k \nonumber \\
&\lesssim_\varepsilon& \sum_{i+j+k=n} e^{(i+k)(\ell+\varepsilon)}\# \G_j \nonumber \\
&=&  \sum_{j=0}^n (n-j)e^{(n-j)(\ell+\varepsilon)} \# \G_j. \label{decomp}
\end{eqnarray}
Hence
\begin{eqnarray*}
\sum_{j=0}^n e^{(n-j)(\ell-h(\Sigma)+2\varepsilon)}(n-j)\frac{\# \G_j}{\# \Sigma_j} \gtrsim_\varepsilon\sum_{j=0}^n e^{(n-j)(\ell+\varepsilon)}(n-j)\frac{\# \G_j}{\# \Sigma_j}  \frac{\#\Sigma_j}{\# \Sigma_n} \gtrsim_\varepsilon 1.
\end{eqnarray*}
In particular, there exists $c_\varepsilon>0$ such that
\begin{eqnarray}
\sum_{j=0}^n e^{(n-j)(\ell-h(\Sigma)+2\varepsilon)}(n-j)\frac{\# \G_j}{\# \Sigma_j} \geq c_\varepsilon. \label{g eqn}
\end{eqnarray}
Since $\ell-h(\Sigma)+2\varepsilon<0$ we  can choose $N \in \N$ sufficiently large that
\begin{eqnarray*}
\sum_{j=0}^{n-N} e^{(n-j)(\ell-h(\Sigma)+2\varepsilon)}(n-j)\frac{\# \G_j}{\# \Sigma_j}& \leq& \sum_{j=0}^{n-N}e^{(n-j)(\ell-h(\Sigma)+2\varepsilon)}(n-j) \\
&\leq& \sum_{m \geq N} e^{m(\ell-h(\Sigma)+2\varepsilon)}m \leq \frac{c_\varepsilon}{2}.
\end{eqnarray*}

 Hence by (\ref{g eqn}) 
\begin{eqnarray}
\sum_{j=n-N+1}^n e^{(n-j)(\ell-h(\Sigma)+2\varepsilon)}(n-j)\frac{\# \G_j}{\# \Sigma_j} \geq \frac{c_\varepsilon}{2}. \label{g eqn2} \end{eqnarray}
If we let $C$ be a uniform upper bound on $e^{m(\ell-h(\Sigma)+2\varepsilon)}m$ (for $m \geq 0$), we can deduce from (\ref{g eqn2}) that
$$\sum_{j=n-N+1}^n \frac{\# \G_j}{\# \Sigma_j} \geq \frac{c_\varepsilon}{2C}$$
hence for all $n \geq N+1$ we have $\# \G_j \geq \frac{c_\varepsilon}{2CN} \# \Sigma_j$ for some $n-N+1 \leq j \leq n$. This implies that $h= \limsup_{n \to \infty} \frac{1}{n} \log \# \G_n=h(\Sigma)$. 
\end{proof}

Clearly by combining Lemma \ref{g lemma} with Theorem \ref{g-thm} we establish Theorem \ref{fgeq1}: that if $\Sigma$ and $\pi \Sigma$ satisfy unique decomposition with respect to $\C$ and $\pi \C$ and we have that $f(\ell)> 1$ and $f_{\pi}(\ell_\pi) >  1$ then
\begin{eqnarray} \bd \Pi(\Sigma)=\frac{h(\pi\Sigma)}{\log \m} + \frac{h(\Sigma) - h(\pi\Sigma)}{\log \n}.\label{usual2} \end{eqnarray}
Hence to establish \eqref{usual2} for uniquely decomposing coded subshifts $\Sigma$ and $\pi \Sigma$, it is sufficient to calculate $f(\ell)$ and $f_\pi(\ell_\pi)$, which solely depend on $\#\C_n, \#\pi \C_n, \# \L_n$ and $\# \pi \L_n$ which are often easy to compute. We demonstrate this with some examples in the next section.

\subsection{Examples} In this section, we illustrate Theorems \ref{dimdrop}, \ref{g-thm} and \ref{fgeq1} with some examples. First, in  \S \ref{beta} we describe how Theorem \ref{g-thm} can be applied to $\beta$-shifts. In \S \ref{good} we apply Theorem \ref{fgeq1} to (generalised) $S$-gap shifts. Finally in \S \ref{sdimdrop} we construct an example of a coded subshift $\Sigma$ where $h<h(\Sigma)$ and $h_\pi=h(\pi\Sigma)$ and
 $$\bd \Pi(\Sigma)= \max\left\{\frac{h(\Sigma)}{\log \n}, \frac{h(\pi \Sigma)}{\log \m}\right\}<\frac{h(\pi\Sigma)}{\log \m} + \frac{h(\Sigma) - h(\pi\Sigma)}{\log \n}$$
thereby proving Theorem \ref{dimdrop}.  

\subsubsection{$\beta$-shifts} \label{beta}

Fix $\n>\m$ and $\I \subset \Delta_{\m,\n}$. We begin by describing a subshift on the set of digits $\I$ which is conjugate to the $\beta$-shift on the usual digit set $\{0, \ldots, \lfloor \beta \rfloor\}$, for more details see \cite{blanchard} or \cite{ct} and references therein.

 Fix a bijection $\O:\{0,\ldots, |\I|-1\} \to \I$ which will determine an ordering on the elements in $\I$. We extend $\O$ to finite and infinite words with digits in $\{0, \ldots, |\I|-1\}$ by $\O(i_1i_1 \ldots)=\O(i_1)\O(i_2)\ldots$. Fix $|\I|<\beta<|\I|+1$ and let $(b_n)_{n \in \N}$ be the greedy $\beta$-expansion of 1, meaning the lexicographically maximal solution to
$$\sum_{n=1}^\infty b_n \beta^{-n}=1.$$
We define
$$\Sigma=\left\{ \O((x_n)_{n \in \N}) \; : \; (x_n)_{n \in \N} \in \{0, \ldots, |\I|-1\}^\N \; \textnormal{s.t.} \; \sigma^k((x_n)_{n \in \N}) \preceq (b_n)_{n \in \N} \; \forall k \in \N\right\}$$
where $\preceq$ stands for the lexicographic order. In particular, $\Sigma$ is conjugated by $\O$ to the $\beta$-shift on the set of digits $\{0, \ldots, \lfloor \beta \rfloor\}$. Therefore it is known \cite{blanchard} that $\Sigma$ is a coded subshift where the set of generators is given by
$$\C=\bigcup_{n \geq 1: b_n>0} \{\O(b_1 \ldots b_{n-1}0), \ldots, \O(b_1 \ldots b_{n-1} (b_n-1))\}.$$

Note that any word in $\i \in \Sigma^*$ can be written $\i=c_1 \ldots c_k w$ where $c_i \in \C$ and $w$ is a word that appears at the beginning of a generator in $\C$. Hence
\begin{eqnarray}
\#\Sigma_n \leq \sum_{k=1}^n \# \G_{n-k} \label{beta1}
\end{eqnarray}
since for each $k \in \N$, $\O(b_1 \ldots b_k)$ is the unique word of length $k$ that appears at the beginning of a generator in $\C$. Similarly, we have
\begin{eqnarray}
\#\pi\Sigma_n \leq \sum_{k=1}^n \# \pi\G_{n-k}. \label{beta2}
\end{eqnarray}
Using \eqref{beta1} and \eqref{beta2} it is easy to adapt the set of inequalities \eqref{decomp} and the estimates that follow it to deduce that $h=h(\Sigma)$ and $h_\pi=h(\pi \Sigma)$. In particular $\bd \Pi(\Sigma)=\frac{h(\pi\Sigma)}{\log \m} + \frac{h(\Sigma) - h(\pi\Sigma)}{\log \n}$ by Theorem \ref{g-thm}.

\subsubsection{Generalised $S$-gap shifts} \label{good}

We begin by considering the following natural generalisation of the $S$-gap shifts \cite{lm,ct}. Fix any $\n>\m$, $\I\subset \Delta_{\m,\n}$ and $(i_0,j_0) \in \I$ such that $\pi(\I \setminus \{(i_0,j_0)\}) \cap \{i_0\}=\emptyset$. Fix a countable set $S \subset \N$. Put
$$\C=\{w\,(i_0,j_0) \; : \; w \in (\I \setminus \{(i_0,j_0)\})^s, \; s \in S\}.$$
We consider the coded subshift $\Sigma$ generated by $\C$. Under the assumptions on $\I$, both $\Sigma$ and $\pi \Sigma$ satisfy unique decomposition with respect to $\C$ and $\pi \C$ respectively. The classical $S$-gap shifts correspond to the case that $\# \I=2$, however since analysis of the box dimension of $\Pi(\Sigma)$ is trivial for subshifts on 2 symbols we are primarily interested in the case that $\# \I \geq 3$.

Observe that
\begin{eqnarray*}
\# \C_n=
\begin{cases} 
(\# \I-1)^{n-1} &  n \in S \\
0 &   n \notin S.
\end{cases}
\end{eqnarray*}
Also clearly $\ell=\log (\# \I-1)$. Therefore,
$$f(\ell)=\frac{(\# \I-1)^{n-1}}{(\# \I-1)^n}=\sum_{n \in S} \frac{1}{\# \I-1}=\infty >1.$$
Similarly we can calculate that
\begin{eqnarray*}
\# \pi \C_n=
\begin{cases} 
(\# \pi\I-1)^{n-1} &  n \in S \\
0 &   n \notin S.
\end{cases}
\end{eqnarray*}
and $\ell_\pi=\log (\#\pi \I-1)$, so $f_\pi(\ell_\pi)=\infty$. In particular, Theorem \ref{fgeq1} is applicable and we deduce that $\bd \Pi(\Sigma)=\frac{h(\pi\Sigma)}{\log \m} + \frac{h(\Sigma) - h(\pi\Sigma)}{\log \n}$.

\subsubsection{Example whose box dimension equals the trivial lower bound}\label{sdimdrop}

Fix $\m \geq 2$ and $\n \geq \max\{\m+1,5\}$. Let 
$$\I=\{(1,1), (1,2), (1,3), (1,4), (1,5), (2,1)\}$$
and $\Omega=\{(1,3), (1,4), (1,5)\}$. Put 
$$\C=\{(1,1)\}\cup\{(2,1)\} \cup\{w\, (1,2)^m \; : \; w \in \Omega^*, \; m \geq 2^{|w|}\}$$
where $(1,2)^m$ denotes the concatenation of $m$ instances of the digit $(1,2)$, and let $\Sigma$ be the coded subshift generated by $\C$. Note that $\pi \Sigma=\{1,2\}^\N$. It is easy to see that $h_\pi=h(\pi \Sigma)=\log 2$, and we will show that $h \leq \log 2 < \log 3=h(\Sigma)$, see Lemma \ref{I_estimate2}. The graph $G$ (see Figure \ref{coded1}) presents $\Sigma$. We will be interested in words which label a path that begins and ends at the vertex $v$. 

\begin{figure}[H]
\begin{center}
   \begin{tikzpicture}[
roundnode/.style={circle, draw=black!60, fill=black!5, thick, minimum size=2mm},]
        \node [roundnode] (1) {$v$} ;

        \path[->] (1) edge  [loop above] node[auto] {\tiny $(1,1)$} ();
        \path[->] (1) edge  [loop below] node[auto] {\tiny $(2,1)$} ();
        \path[->] (1) edge  [loop right] node[auto, xshift=-12pt, yshift=-8pt] {\small $\substack{ w\,(1,2)^m \\ (w \in \Omega^*, \; m \geq 2^{|w|})}$} ();

    \end{tikzpicture}
\end{center} \caption{The graph $G$} \label{coded1}
\end{figure}

\begin{defn}
For each $n \in \N$ let $I_n$ denote all strings in $\Sigma_n$ which can be presented by a path on $G$ ending at $v$. Let $I=\bigcup_{n=1}^\infty I_n$.
\end{defn}

\begin{lma} We have
$$\limsup_{n \to \infty} \frac{1}{n} \log \# I_n \leq \log 2.$$
\label{I_estimate2}
\end{lma}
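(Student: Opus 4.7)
The plan is to exploit the unique factorisation of any path in $G$ ending at $v$: such a path decomposes as the tail of an initial generating loop (from the starting vertex back to $v$) followed by a concatenation of complete generating loops based at $v$. At the word level, every $\i \in I_n$ thus splits as $\i = s \cdot \j$, where $s$ is a proper suffix of some generator and $\j \in \G_{n-|s|}$ is a concatenation of generators. Writing $S_\ell$ for the set of length-$\ell$ suffixes of generators, this yields
\[
\# I_n \;\le\; \sum_{\ell=0}^{n} \# S_\ell \cdot \# \G_{n-\ell}.
\]

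The first easy step is to show $\# S_\ell$ is polynomial in $\ell$. A length-$\ell$ suffix of $w(1,2)^m$ is either $(1,2)^\ell$ (when $\ell \le m$, giving a single word) or $u(1,2)^m$ with $u$ a length-$(\ell-m)$ suffix of $w \in \Omega^*$; in the latter case the constraint $m \ge 2^{|w|} \ge 2^{\ell-m}$ forces $\ell-m \le \log_2\ell$, leaving at most $3^{\log_2\ell}= \ell^{\log_2 3}$ choices for $u$. Together with the $O(1)$ suffixes of $(1,1),(2,1)$, this gives $\# S_\ell \lesssim \ell^{\log_2 3}$.

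The heart of the argument is then to show $\# \G_n \lesssim 2^n\cdot\mathrm{poly}(n)$. Every $\j \in \G_n$ parses uniquely into type A symbols $(1,1),(2,1)$ and maximal type B blocks of the shape $(1,2)^{N_0}\Omega_1(1,2)^{n_1}\cdots \Omega_t(1,2)^{n_t}$ with $k_j=|\Omega_j|\ge 1$ and $n_j\ge 2^{k_j}$. I would count $\# \G_n$ by analysing its generating function: type A stretches contribute $A(x)=2x/(1-2x)$, with pole at $x=1/2$, while the type B blocks contribute a rational function built from the series $T(x)=\sum_{k\ge 1} 3^k x^{k+2^k}$ that captures the $3^k$ Omega-choice freedom of a tooth weighted against its forced $(1,2)$-padding of length $\ge 2^{k}$. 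The plan is to show that the balance between $3^k$ and $2^{k}$ forces the dominant singularity of the full generating function of $\G$ to sit at $x=1/2$, yielding $\# \G_n \lesssim 2^n\cdot\mathrm{poly}(n)$.

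The main obstacle is the delicate singularity analysis in this last step: one must control the contribution of $T(x)$ at $x=1/2$ carefully enough to rule out a singularity strictly below $1/2$ coming from the Omega teeth. Once the bound $\# \G_n \lesssim 2^n \cdot \mathrm{poly}(n)$ is established, combining with the polynomial bound on $\# S_\ell$ gives $\# I_n \lesssim \mathrm{poly}(n)\cdot 2^n$, and taking $\limsup \frac{1}{n} \log(\cdot)$ then produces the claim.
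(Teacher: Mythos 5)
Your reduction of $\# I_n$ to $\sum_\ell \# S_\ell\cdot\#\G_{n-\ell}$ and your polynomial bound $\# S_\ell\lesssim \ell^{\log_2 3}$ are both fine, but the step you defer --- establishing $\#\G_n\lesssim 2^n\cdot(\text{polynomial in } n)$ by showing the dominant singularity of the generating function of $\G$ sits at $x=1/2$ --- is not a technicality that a more careful singularity analysis will rescue: it fails. Writing $f(x)=\sum_n\#\C_n x^n$, the two singleton generators $(1,1),(2,1)$ already contribute $2x=1$ at $x=1/2$, and the $k=1$ teeth $w(1,2)^m$ (with $w\in\Omega$, $m\ge 2$) contribute a further $3x^3/(1-x)=3/4$ there, so $f(1/2)>1$ and the smallest root of $f(x)=1$ lies strictly below $1/2$. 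This genuinely forces $\#\G_n$ to grow strictly faster than $2^n$. Concretely, for $0\le j\le n/3$ the concatenations of $j$ generators from $\{w(1,2)^2:w\in\Omega\}$ with $n-3j$ generators from $\{(1,1),(2,1)\}$, in all possible orders, are pairwise distinct words of $\G_n\subseteq I_n$ (the word determines the positions of its $\Omega$-symbols and hence the parsing), and there are $\binom{n-2j}{j}3^j2^{n-3j}$ of them; optimising over $j$ gives growth $\rho^{n(1+o(1))}$ with $\rho=x_0^{-1}\approx 2.48>2$, where $2x_0+3x_0^3=1$. So the plan as you state it cannot be completed.

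You should be aware that this is not a defect of your route alone. The paper's own proof takes a different path: it counts words of $I_n$ according to the total number $c$ of $\Omega$-digits and the number $a$ of digits from $\{(1,1),(2,1)\}$, asserts that $a+c+2^c\le n$ and hence $c\le\log_2(n-a)$, and this logarithmic confinement of $c$ (and of all the binomial factors) is what delivers the bound $2^{n(1+o(1))}$. But that inequality presumes all $c$ of the $\Omega$-digits lie in a single generator; when they are spread over $k$ blocks of sizes $c_1,\dots,c_k$ the mandatory $(1,2)$-padding is only $\sum_i 2^{c_i}$, which is as small as $2c$ when every block is a singleton, so the true constraint is only $c\le (n-a)/3$. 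The explicit family above (with $c=j$ a positive proportion of $n$) violates $a+c+2^c\le n$ and, as computed, pushes $\limsup_n \frac1n\log\# I_n$ strictly above $\log 2$. In other words, your proposal has correctly located the genuine obstruction --- controlling the contribution of many short $\Omega$-blocks --- but that obstruction appears to be fatal to the stated inequality itself, not merely to your method; the lemma as stated (and the paper's argument for it) does not survive this family of words.
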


\begin{proof}
Suppose a word in $ I_n$ has $c$ digits from $\Omega$ and $a$ digits from $\{(1,1),(2,1)\}$. 

By definition of the code words $\C$, we must have $a+c+2^c \leq n$ therefore $c \leq \log_2(n-a).$ Now, assuming $c>0$, for each $1 \leq k \leq c$ there are ${c-1 \choose k-1}$ ways to divide the $c$ digits into $k$ groups.

 Following each of the $k$ blocks of digits from $\Omega$ there must be a string of $(1,2)$'s whose length is equal to the exponential of the length of that block. That leaves $n-c-2^c-a$ extra $(1,2)$'s to be distributed. These can be placed after any of the $k$ blocks of $(1,2)$'s, or directly before the first block of digits from $\Omega$. This gives ${n-c-2^c-a+k \choose k}$ different ways in which we can distribute the excess $(1,2)$'s. 

Finally, we can distribute the $a$ digits from $\{(1,1),(2,1)\}$ directly preceding any of the $k$ blocks of $(1,2)$'s or at the end of the word. This gives ${a+k \choose k}$ possibilities for distributing the $a$ digits from $\{(1,1),(2,1)\}$.

Note that since $k \leq c \leq \log_2 (n-a)$ we have
$${a +k \choose k} \leq {a+ \log_2 (n-a) \choose \log_2 (n-a)} \leq (e+en)^{\log_2 n}$$
where we have used that ${ N \choose k} \leq (\frac{eN}{k})^k$. Similarly
$${n-c-2^c-a+k \choose k} \leq {n-c-2^c-a+\log_2 (n-a) \choose \log_2 (n-a)} \leq  (e+en)^{\log_2 n}.$$
Also, since $c \leq \log_2 (n-a)$, 
$${c-1 \choose k-1} \leq 2^{ \log_2 (n-a)} \leq 2^{\log_2 n}$$
where we have first bounded ${c-1 \choose k-1}$ by the central binomial term and used that ${2N \choose N} \leq 4^N$.

Therefore
\begin{eqnarray*}
\# I_n & \leq& \sum_{a=0}^n \sum_{c=0}^{\log_2 (n-a)} \sum_{k=1}^c {c-1 \choose k-1} 3^c {n-c-2^c-a+k \choose k} {a+k \choose k} 2^a \\
 & \leq&(e+en)^{2\log_2 n} 2^{\log n}\sum_{a=0}^n \sum_{c=0}^{\log_2(n-a)} \sum_{k=1}^c 3^c 2^a \\
&\leq& (e+en)^{2\log_2 n} 2^{\log_2 n} 3^{\log_2 n}\sum_{a=0}^n \sum_{c=0}^{\log_2(n-a)} \sum_{k=1}^c  2^a 
\end{eqnarray*}
from which the result follows.
\end{proof}

Using the above estimate for $\# I_n$, it is now easy to compute the entropy of $\Sigma$.

\begin{lma} \label{entropy2}
$h(\Sigma)= \log 3$.
\end{lma}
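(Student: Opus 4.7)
The plan is to establish $\log 3 \leq h(\Sigma) \leq \log 3$ by directly counting $\#\Sigma_n$.

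For the lower bound, observe that for any $w \in \Omega^n$ the word $w(1,2)^{2^n}$ is a valid generator, since $2^n = 2^{|w|}$ satisfies the constraint $m \geq 2^{|w|}$. Hence the periodic sequence $(w(1,2)^{2^n})^{\infty}$ lies in $B \subseteq \Sigma$ and has $w$ as its length-$n$ prefix. Thus $\Omega^n \subseteq \Sigma_n$, giving $\#\Sigma_n \geq 3^n$ and therefore $h(\Sigma) \geq \log 3$.

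For the upper bound, I would decompose an arbitrary $\i \in \Sigma_n$ as $\i = q\cdot p$ where $q \in I_{n-j}$ (so $q$ labels a path in $G$ ending at the central vertex $v$) and $p$ is a length-$j$ prefix of a single generator. Such a factorisation follows from reading off a path in $G$ presenting some element of $B$ that contains $\i$ as a subword, together with the standard fact that $\overline{B}$ and $B$ have the same language. Next I would count the length-$j$ prefixes of generators: these split into (i) initial segments of the $\Omega^\ast$-part of a generator, i.e.\ words in $\Omega^j$, contributing $3^j$; (ii) words of the form $w(1,2)^{j-|w|}$ with $|w| = k < j$ and $w \in \Omega^k$, contributing $3^k$ for each $k$, summing to at most $3^j$; together with a constant number of exceptional short prefixes coming from the single-digit generators. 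The total is at most $C\cdot 3^j$ for an absolute constant $C$.

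Combining this with the estimate $\#I_m \leq C_\varepsilon (2+\varepsilon)^m$ supplied by Lemma~\ref{I_estimate2}, one obtains
\[
\#\Sigma_n \;\leq\; \sum_{j=0}^{n} \#I_{n-j}\cdot C\cdot 3^j \;\lesssim_\varepsilon\; 3^n \sum_{j=0}^{n} \left(\frac{2+\varepsilon}{3}\right)^{n-j},
\]
and choosing $\varepsilon < 1$ makes the geometric series uniformly bounded. Hence $\#\Sigma_n \lesssim 3^n$ and $h(\Sigma) \leq \log 3$.

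The main subtlety I anticipate is justifying the decomposition $\i = qp$ cleanly: one must verify that every subword of an element of $B$ (a suffix of a generator followed by an infinite concatenation of generators) factors as an $I$-word followed by a prefix of exactly one generator, handling in particular the case where $\i$ sits entirely inside a single very long generator $w(1,2)^m$, where $q$ should be taken to be a suffix of that generator ending at $v$ and $p$ empty (or $q$ empty and $p$ the appropriate prefix). Once this bookkeeping is settled, the counting above is straightforward.
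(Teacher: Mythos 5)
Your proof is correct and follows essentially the same route as the paper: the paper's three mutually exclusive categories for $\i \in \Sigma_n$ are exactly your decomposition $\i = qp$ with $q \in I$ and $p$ a generator prefix of the form $w(1,2)^m$ (allowing either part to be empty), and both arguments then combine the $3^j$ count of generator prefixes with the bound on $\# I_{n-j}$ from Lemma~\ref{I_estimate2} to get $\#\Sigma_n \lesssim_\varepsilon 3^{(1+\varepsilon)n}$. The bookkeeping you flag (words sitting inside a single long generator, or starting mid-generator) is handled implicitly by the paper in the same way, via the definition of $I_n$ as labels of paths ending at $v$.
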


\begin{proof}
The lower bound $h(\Sigma)\geq \log 3$ follows from the fact that $\Omega^\N \subset \Sigma$. So it is sufficient to prove the upper bound. Fix any $\varepsilon>0$. Suppose $\i \in \Sigma_n$. Then $\i$ falls into one of the following mutually exclusive categories:
\begin{enumerate}[(i)]
\item  $\i \in I_n$.
\item $\i=\j\k$ for $\j \in I$ and $\k=w\, (1,2)^m$ where $w \in \Omega^*$, $m \geq 0$.
\item $\i=w\, (1,2)^m$ for $w \in \Omega^*$ and $m \geq 0$. 
\end{enumerate}
By Lemma \ref{I_estimate2} the number of strings in category (i) is $\lesssim_\varepsilon 2^{(1+\varepsilon)n}$. The number of strings in category (iii) is given by
$$\sum_{m=0}^n 3^{n-m} \lesssim_\varepsilon 3^{(1+\varepsilon)n}.$$
 Finally, the number of strings in category (ii) is given by
$$\sum_{j=1}^{n-1} \sum_{m=0}^{n-j-1} \# I_j 3^{n-m-j} \leq \sum_{j=1}^{n-1} \sum_{m=0}^{n-j-1}  2^j3^{n-m-j}\lesssim_\varepsilon 3^{(1+\varepsilon)n}.$$
Hence $\frac{1}{n}\log \# \Sigma_n \lesssim_\varepsilon (1+\varepsilon) \log 3$ which concludes the proof of the upper bound since $\varepsilon>0$ was chosen arbitrarily.
\end{proof}


We will now prove Theorem \ref{dimdrop} by showing that
$$\bd \Pi(\Sigma)= \max\left\{\frac{\log 3}{\log \n}, \frac{\log 2}{\log \m}\right\}=\max\left\{\frac{h(\Sigma)}{\log \n}, \frac{h(\pi \Sigma)}{\log \m}\right\}.$$

Note that $\bd \Pi(\Sigma)$ can attain either $\frac{h(\Sigma)}{\log \n}$ or $ \frac{h(\pi \Sigma)}{\log \m}$. For instance if $\n=5$, $\m=2$ then $\bd \Pi(\Sigma)=1=\frac{h(\pi \Sigma)}{\log \m}$. Whereas if $\n=6$, $\m=5$ then $\bd \Pi(\Sigma)= \frac{\log 3}{\log 6}= \frac{h(\Sigma)}{\log \n}$.

\begin{proof}[Proof of Theorem \ref{dimdrop}] The lower bound corresponds to the trivial lower bound from Proposition \ref{bounds}. So we just need to prove the upper bound. Fix $\varepsilon>0$, $\delta>0$. Let $k=k(\delta)$ and $l=l(\delta)$ and $\i \in \Sigma_l$. Then $\i$ falls into one of the following mutually exclusive categories.
\begin{enumerate}[(1)]
\item $\i=\j\k$ where $\j \in \Sigma_k$ and $\pi(\k)=1^{l-k}$. 
\item  $\i=\j\k \, (2,1) \,\l$ where:  for some $1 \leq m \leq l-k$, $\pi(\l) \in \{1,2\}^{m-1}$; $\pi(\k)=1^{l-k-m}$; $\j \in \Sigma_k$ has the form $\j=uw$ for $u \in I$ and $w \in \Omega^*$ with length $1 \leq |w| \leq \log_2 (l-k-m).$
\item $\i=\j \k \, (2,1) \, \l$ where: for some $1 \leq m \leq l-k$, $\pi(\l) \in \{1,2\}^{m-1}$; $\pi(\k)=1^{l-k-m}$; $\j= uw\, (1,2)^z$ where $u \in I$, $1 \leq z \leq k$, $w \in \Omega^*$ with length $0\leq |w| \leq \log_2 (l-k+z-m)$.
\end{enumerate}
For each $j=1,2,3$ we define 
$$A_j:=\bigcup_{\textnormal{$\i \in \Sigma_l$ in category ($j$)}} \Pi([\i]).$$ 
Then 
\begin{eqnarray}
N_\delta(\Pi(\Sigma)) \leq \sum_{j=1}^3 N_\delta(A_j).\label{a_i}\end{eqnarray}

Firstly,
$$N_\delta(A_1)=\# \Sigma_k \lesssim_\varepsilon 3^{(1+\varepsilon)k}$$
by Lemma \ref{entropy2}. Secondly,
\begin{eqnarray*}
N_\delta(A_2)=\sum_{m=1}^{l-k} \sum_{|w|=1}^{\log_2 (l-k-m)} \# I_{k-|w|} 3^{|w|} 2^{m-1} 
&\lesssim_\varepsilon& \sum_{m=1}^{l-k} \sum_{|w|=1}^{\log_2 (l-k-m)} 2^{(1+\varepsilon)(k-|w|)}3^{|w|} 2^{m-1}\\
&\lesssim_\varepsilon& 2^{(1+\varepsilon)k} \left(\frac{3}{2^{1+\varepsilon}}\right)^{\log_2 (l-k)} 2^{(1+\varepsilon)(l-k)} \lesssim_\varepsilon 2^{(1+2\varepsilon)l}.
\end{eqnarray*}
Finally,  
\begin{eqnarray*}
N_\delta(A_3) &=& \sum_{m=1}^{l-k} \sum_{z=1}^k \sum_{|w|=0}^{\log_2 (l-k+z-m)} \# I_{k-|w|-z} 3^{|w|} 2^{m-1} \\
&\lesssim_\varepsilon& \sum_{m=1}^{l-k} \sum_{z=1}^k \sum_{|w|=0}^{\log_2 (l-k+z-m)} 2^{(1+\varepsilon)( k-|w|-z)} 3^{|w|} 2^{m-1} \\
&\lesssim_\varepsilon&  \sum_{m=1}^{l-k} \sum_{z=1}^k 2^{(1+\varepsilon)(k-z)}\left(\frac{3}{2^{1+\varepsilon}}\right)^{\log_2 (l-k+z-m)}2^{m-1} \\
&\lesssim_\varepsilon&    2^{(1+\varepsilon)k} \left(\frac{3}{2^{1+\varepsilon}}\right)^{\log_2 l}2^{(1+\varepsilon)(l-k)} \lesssim_\varepsilon 2^{(1+2\varepsilon)l} .
\end{eqnarray*}
By \eqref{a_i} we deduce that 
$$\bd \Pi(\Sigma) \leq\max\left\{\frac{\log 3}{\log \n}, \frac{\log 2}{\log \m}\right\},$$
as required.
\end{proof}

\begin{bibdiv}
\begin{biblist}

\bib{bedford}{thesis}{,
  author={Bedford, Tim},
  title={Crinkly curves, Markov partitions and dimension},
  date={1984},
  school={University of Warwick}
}

\bib{blanchard}{article}{
   author={Blanchard, F.},
 title={$\beta$-expansions and symbolic dynamics},
   journal={Theoret. Comput. Sci.},
   volume={65},
   date={1989},
   number={2},
   pages={131--141}
}

\bib{bh}{article}{
   author={Blanchard, F.},
   author={Hansel, G.},
   title={Syst\`emes cod\'{e}s},
   journal={Theoret. Comput. Sci.},
   volume={44},
   date={1986},
   number={1},
   pages={17--49}
}

\bib{ct}{article}{
   author={Climenhaga, Vaughn},
   author={Thompson, Daniel J.},
   title={Intrinsic ergodicity beyond specification: $\beta$-shifts, $S$-gap
   shifts, and their factors},
   journal={Israel J. Math.},
   volume={192},
   date={2012},
   number={2},
   pages={785--817}
}

\bib{deliu}{article}{
   author={Deliu, Anca},
   author={Geronimo, J. S.},
   author={Shonkwiler, R.},
   author={Hardin, D.},
   title={Dimensions associated with recurrent self-similar sets},
   journal={Math. Proc. Cambridge Philos. Soc.},
   volume={110},
   date={1991},
   number={2},
   pages={327--336}
}

\bib{falconer}{book}{
    AUTHOR = {Falconer, Kenneth},
     TITLE = {Fractal geometry},
   EDITION = {Third},
      NOTE = {Mathematical foundations and applications},
 PUBLISHER = {John Wiley \& Sons, Ltd., Chichester},
      YEAR = {2014},
     PAGES = {xxx+368},
      ISBN = {978-1-119-94239-9},
   MRCLASS = {28-01 (11K55 28A78 28A80 37C45 37F10)},
  MRNUMBER = {3236784},
MRREVIEWER = {Manuel Mor\'{a}n},
}

\bib{furstenberg}{article}{
   author={Furstenberg, H.},
   title={Disjointness in ergodic theory, minimal sets, and a problem in Diophantine
approximation},
   journal={Math. Systems Theory},
   volume={1},
   date={1967},
   pages={1--49}
}

\bib{kp}{article}{
   author={Kenyon, R.},
   author={Peres, Y.},
   title={Hausdorff dimensions of sofic affine-invariant sets},
   journal={Israel J. Math.},
   volume={94},
   date={1996},
   pages={157--178}
}

\bib{kp-measures}{article}{
   author={Kenyon, R.},
   author={Peres, Y.},
   title={Measures of full dimension on affine-invariant sets},
   journal={Ergodic Theory Dynam. Systems},
   volume={16},
   date={1996},
   number={2},
   pages={307--323}
}

\bib{lm}{book}{
   author={Lind, Douglas},
   author={Marcus, Brian},
   title={An introduction to symbolic dynamics and coding},
   publisher={Cambridge University Press, Cambridge},
   date={1995},
   pages={xvi+495}
}

\bib{mcmullen}{article}{
   author={McMullen, Curt},
   title={The Hausdorff dimension of general Sierpi\'{n}ski carpets},
   journal={Nagoya Math. J.},
   volume={96},
   date={1984},
   pages={1--9}
}

\bib{pavlov}{article}{
   author={Pavlov, R.},
   title={On entropy and intrinsic ergodicity of coded subshifts.},
   journal={to appear in Proc. Amer. Math. Soc.}
   eprint={https://arxiv.org/abs/1803.05966}
}

\bib{vj}{article}{
   author={Vere-Jones, D.},
   title={Geometric ergodicity in denumerable Markov chains},
   journal={Quart. J. Math. Oxford Ser. (2)},
   volume={13},
   date={1962},
   pages={7--28},
}

\end{biblist}
\end{bibdiv}

\end{document}